  \renewcommand{\geq}{\geqslant}
  \renewcommand{\leq}{\leqslant}
  \newcommand{\stack}[2]{\genfrac{}{}{0pt}{}{#1}{#2}}
  \newcommand{\norm}[1]{\Vert #1 \Vert}
  \newcommand{\lm}{\lambda_\text{max}}
  \newcommand{\twobyone}[2]{\bigl( \begin{smallmatrix} #1 \\ #2 \end{smallmatrix} \bigr)}
  \newcommand{\twobytwo}[4]{\bigl( \begin{smallmatrix} #1 & #2 \\ #3 & #4 \end{smallmatrix} \bigr)}
  \newcommand{\N}{\mathbb{N}}
  \newcommand{\R}{\mathbb{R}}
  \renewcommand{\i}{\mathbf{i}}
  \renewcommand{\a}{\mathbf{a}}
  \renewcommand{\b}{\mathbf{b}}
  \renewcommand{\c}{\mathbf{c}}
  \renewcommand{\d}{\mathbf{d}}
  \renewcommand{\u}{\mathbf{u}}
  \renewcommand{\v}{\mathbf{v}}
  \newcommand{\w}{\mathbf{w}}
  \newcommand{\x}{\mathbf{x}}
  \newcommand{\y}{\mathbf{y}}
  \newcommand{\z}{\mathbf{z}}
  \newcommand{\0}{\mathbf{0}}
  \newcommand{\1}{\mathbf{1}}
  \newcommand{\lamm}{\lambda_{\text{max}}}
  \newcommand{\cE}{\mathcal{E}}
  \newcommand{\cK}{\mathcal{K}}
  \newcommand{\ms}[1]{\mathscr{#1}}
  \newcommand{\hs}{\hspace*{\parindent}}
  \newcommand{\tr}{\mathop{\mathrm{trace}}\nolimits}
  \newcommand{\trans}{^\top}
  \newcommand{\conv}{\mathrm{conv\;}}
  \newcommand{\rC}{\mathrm{C}}
  \newcommand{\rS}{\mathrm{S}}
  \newcommand{\set}[1]{\{#1\}}
  \newcommand{\bm}[1]{\mbox{{\boldmath $#1$}}}
  \newcommand{\floor}[1]{\left\lfloor #1 \right\rfloor}
  \newtheorem{theo}{\bfseries \hs Theorem}[section]
  \newtheorem{defn}[theo]{\bfseries \hs Definition}
  \newtheorem{prob}[theo]{\bfseries \hs Problem}
  \newtheorem{prop}[theo]{\bfseries \hs Proposition}
  \newtheorem{lem}[theo]{\bfseries \hs Lemma}
  \newtheorem{corol}[theo]{\bfseries \hs Corollary}
  \newtheorem{con}[theo]{\bfseries \hs Conjecture}
  \numberwithin{equation}{section} 
\begin{document}
  \title{On the First Eigenvalue of Bipartite Graphs}
  \author{
  Amitava Bhattacharya
  \\
  \texttt{amitava@math.uic.edu}
  \and
  Shmuel Friedland\footnote{
   Visiting Professor, Fall 2007 - Winter 2008,
  Berlin Mathematical School, Berlin, Germany
 }
  \\
  \texttt{friedlan@uic.edu}
  \and
  Uri N. Peled
  \\
  \texttt{uripeled@uic.edu}
  }
  \date{Department of Mathematics, Statistics, and Computer Science,\\
        University of Illinois at Chicago\\
        Chicago, Illinois 60607-7045, USA\\
        September 09, 2008}

 \maketitle
 \begin{abstract}
 In this paper we study the maximum value of the largest eigenvalue
 for simple bipartite graphs, where the number of edges is given
 and the number of vertices on each side of the bipartition is
 given. We state a conjectured solution, which is an analog of the Brualdi-
 Hoffman conjecture for general graphs, and prove
 the conjecture in some special cases.

 \end{abstract}

 \noindent {\bf 2000 Mathematics Subject Classification.}
 05C07, 05C35, 05C50, 15A18.

 \noindent {\bf Key words.}  Bipartite graph, maximal
 eigenvalue, Brualdi-Hoffman conjecture, degree, sequences, chain graphs.

 \section{Introduction}\label{sec:intro}
 The purpose of this paper to study the maximum value of
 the maximum eigenvalue of certain classes of bipartite graphs.
 These problem are analogous to the problems considered in the
 literature for general graphs and $0-1$ matrices
 \cite{BH, Fr1, Fr2, Row, Sta}.  We describe briefly the main
 problems and results obtained in this paper.

  We consider only finite simple undirected graphs bipartite graphs $G$.
  Let $G=(V\cup W,E)$, where $V=\set{v_1,\ldots,v_m}, W=\set{w_1,\ldots,w_n}$
  are the two set of vertices of $G$.  We view the undirected edges $E$
  of $G$ as a subset of $V\times W$.  Denote by $\deg v_i, \deg w_j$
  the degrees of the vertices $v_i, w_j$ respectively.  Let
  $D(G)  =\set{d_1(G) \geq d_2(G) \geq \cdots \geq d_m(G)}$ be the rearranged
  set of the degrees $\deg v_1,\ldots,\deg v_m$.
  Note that $e(G)=\sum_{i=1}^m \deg v_i$
  is the number of edges in $G$.  Denote by $\lamm(G) $ the maximal
  eigenvalue of $G$.  Denote by $G_{\textrm{ni}}$ the induced
  subgraph of $G$ consisting of nonisolated vertices of $G$.
  Note that $e(G)=e(G_{\textrm{ni}}),
  \lamm(G)=\lamm(G_{\textrm{ni}})$.
  It is straightforward to show, see Proposition \ref{upestimlm}, that

  \begin{equation}\label{basinbge}
  \lamm(G)\leq \sqrt{e(G)}.
  \end{equation}
  Furthermore the equality holds if and only if $G_{\textrm{ni}}$ is
  a complete bipartite graph.  In what follows we assume that
  $G=G_{\textrm{ni}}$, unless stated otherwise.

  The majority of this paper is devoted to refinements of
  (\ref{basinbge}) for noncomplete bipartite graphs.  We now
  state the basic problem that this paper deal with.
 Denote by $K_{p,q}=(V\cup W,E)$ the complete bipartite graph where $\#V=p,\#W=q, E=V\times W$.
 We assume here the normalization $1\le
 p\leq q$.  Let $e$ be a positive integer satisfying $e\leq pq$.
 Denote by $\cK(p,q,e)$ the family of subgraphs $K_{p,q}$ with
 $e$ edges and with no isolated vertices and \emph{which are not complete
 bipartite graphs}.

  \begin{prob}\label{bipprob}  Let $2\leq p\leq q, 1 <e<pq$ be
  integers.
  Characterize the graphs which solve the maximal problem
  \begin{equation}\label{maxfeigp}
  \max_{G\in\cK(p,q,e)} \lm(G).
  \end{equation}

  \end{prob}

  We conjecture below an analog of the Brualdi-Hoffman conjecture for
  nonbipartite graphs \cite{BH}, which was proved by
  Rowlinson \cite{Row}.  See \cite{Fr2, Sta} for the proof of
  partial cases of this conjecture.

  \begin{con}\label{bipcone} Under the assumptions of Problem \ref{bipprob}
  an extremal graph that solves the maximal problem (\ref{maxfeigp})
  is obtained from a complete bipartite graph by adding
  one vertex and a corresponding number of edges.
  \end{con}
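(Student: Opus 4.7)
The plan is to attack the conjecture via a two-stage reduction: first, reduce to \emph{chain graphs}, i.e., bipartite graphs whose $V$-side (and hence $W$-side) neighborhoods are linearly ordered by inclusion; and second, classify the extremal chain graph.

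For the first stage, let $G\in\cK(p,q,e)$ be extremal, with biadjacency matrix $B\in\set{0,1}^{p\times q}$, and let $\x>0$, $\y>0$ be Perron singular vectors satisfying $B\y=\lambda\x$ and $B\trans\x=\lambda\y$ with $\lambda=\lamm(G)$. Suppose two vertices $v_i,v_j\in V$ have incomparable neighborhoods, so there exist $w_k\in N(v_i)\setminus N(v_j)$ and $w_\ell\in N(v_j)\setminus N(v_i)$. If $y_k\geq y_\ell$ I would delete $v_jw_\ell$ and insert $v_jw_k$: this preserves every $V$-degree, changes $\x\trans B\y$ by $x_j(y_k-y_\ell)\geq 0$, and therefore (by the variational characterization) cannot decrease $\lamm$. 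The opposite inequality allows the symmetric shift at $v_i$. Iterating this \emph{shifting} move, together with its mirror on the $W$-side, terminates in a chain graph $G^\ast$ with $\lamm(G^\ast)\geq\lamm(G)$; a routine verification, using $e<pq$ and the fact that shifts on one side preserve degrees on that side, shows $G^\ast$ can be kept in $\cK(p,q,e)$ throughout.

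For the second stage, let $G$ be a chain graph with $V$-side degree sequence $d_1\geq\cdots\geq d_p\geq 1$ and $d_i\leq q$. The conjecture, restricted to chain graphs, asserts that the extremum has at most one index $i^\ast$ with $1\leq d_{i^\ast}\leq q-1$, so the Ferrers diagram of $G$ is a saturated rectangle plus a single incomplete row. To establish this, suppose two partial rows $(d_i,d_j)$ with $i<j$ and $1\leq d_j\leq d_i\leq q-1$ exist, and compare $\lamm(G)$ with $\lamm(G')$ where $G'$ is obtained by the concentrating move $(d_i,d_j)\mapsto(\min(d_i+1,q),\,d_j-1)$, which preserves both $e$ and the chain structure. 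Using the identity $\lamm(G)^2=\x\trans BB\trans\x/\x\trans\x$ and the monotonicity $x_i\geq x_j$ forced by the chain ordering, I would derive an explicit strictly positive lower bound on $\lamm(G')^2-\lamm(G)^2$. A mirror argument on the $W$-side rules out two partial columns, after which only the conjectured Ferrers shape remains; whether the single partial row sits on the $V$- or $W$-side is then dictated by comparing the two candidate configurations, whose combinatorics is governed by $e$ modulo $p$ and $q$.

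The main obstacle is precisely this second stage. The concentrating move can leave $\lamm$ unchanged on degenerate configurations, so strict increase must be tracked through a whole sequence of moves, and controlling the Perron vector after each move seems to require a global, rather than purely local, argument. Moreover, the asymmetry of the $p\times q$ biadjacency matrix means Rowlinson's symmetric-matrix argument for the Brualdi--Hoffman conjecture \cite{Row} does not transfer directly to the bipartite setting, which is consistent with only partial cases being available in the literature \cite{Fr2,Sta}. A complete proof will likely require either a new rearrangement inequality for the spectral radius under chain-graph degree redistribution, or a compactness argument combining the shifting reduction with a fine analysis of how the Perron pair $(\x,\y)$ depends on the degree sequence.
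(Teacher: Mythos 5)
First, note that the statement you are addressing is stated in the paper only as a \emph{conjecture}; the paper does not prove it in general, but only in the special cases of Theorem~\ref{thm: prfmconj} (roughly, $e\equiv r-1 \pmod r$ with $p$ and $q$ confined to a window around $l+1=\lfloor e/r\rfloor+1$). Your first stage --- reducing an extremal graph to a chain graph by shifting edges toward vertices with larger Perron weight --- is sound and is essentially the paper's Theorem~\ref{theo:maintheo}, proved there by left-justifying the rows of the representation matrix and invoking the variational characterization~(\ref{quotient}); the one point you gloss over is that the paper first needs a separate argument (Theorem~\ref{theo:covers} together with Lemma~\ref{theo:connected}) to rule out disconnected competitors before the shifting argument can be run with a strictly positive Perron pair.

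The genuine gap is your second stage, and you have named it yourself: the ``concentrating move'' $(d_i,d_j)\mapsto(\min(d_i+1,q),\,d_j-1)$ is precisely the step for which you have no proof that $\lm$ increases, and no such local monotonicity is established (or even claimed) anywhere in the paper; as written, your proposal is a plan with an acknowledged hole, not a proof. It is worth recording that the paper's partial progress does \emph{not} go through local Ferrers-diagram moves at all. Instead it (i)~bounds $\sigma_1^2\sigma_2^2$ from below by a quantity $\omega^*$ computed from the second compound matrix $\Lambda_2 A$, yielding the upper bound~(\ref{upestopg}) on $\lm(G_D)^2$, which is tight exactly when the degree sequence takes two distinct values ($h=2$); (ii)~converts the extremal problem in that case into minimizing $m_1m_2n_1n_2$ under the linear constraint~(\ref{constre}), solved in Theorem~\ref{e=rk+r-1}; and (iii)~handles arbitrary $h$ by writing the degree vector $\d$ as a convex combination of two-valued degree vectors $\a_k(\d)$ and exploiting convexity of $\lambda_1$ on the matrices $M(\c)$ (Proposition~\ref{prop:convp}), so the maximum is attained at an $h=2$ extreme point. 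That convexity argument is the substitute for the ``rearrangement inequality for degree redistribution'' you say you would need; but even with it, the conjecture is settled only for the stated congruence classes of $e$ and ranges of $p,q$, so your outline should not be expected to close into a full proof of the conjecture.
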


  Our first result toward the solution of Problem \ref{bipprob} is
  of interest by itself.  Let  $D  =\set{d_1 \geq d_2 \geq \cdots \geq d_m}$
  be a set of positive integers, and let
  $\ms{B}_D$ be the class of bipartite graphs $G$ with no isolated
  vertices, where $D(G)=D$.  We show that $\max_{G\in \ms{B}_D } \lamm(G)$
  is achieved for a unique graph, up to isomorphism,
  which is the chain graph \cite{Yannakakis}, or the difference graph
  \cite{Mahadev-Peled},
  corresponding to $D$.  (See \S\ref{sec:prelim}.)
  It follows that an extremal graph solving the Problem \ref{bipprob} is a
  chain graph.

  Our main result, Theorem~\ref{thm: prfmconj}, shows that Conjecture~\ref{bipcone}
  holds in the following cases.  Fix $r \geq 2$ and assume that $e \equiv r-1$
  mod $r$. Assume that $l = \floor{\frac{e}{r}} \geq r$.  Let
  $p \in[r,l+1]$ and $q \in [l+1, l + 1 + \frac{l}{r-1}]$.
  So $K_{p,q}$ has more than $e$ edges.
  Then the maximum
  (\ref{maxfeigp}) is achieved
  if and
  only if $G$ is isomorphic to the following chain graph $G_{r,l+1}$.
  $G_{r,l+1}$ obtained from
  $K_{r-1,l+1} = (V \cup W, E)$
  by adding an additional vertex $v_r$ to the set $V$, and connecting $v_r$ to
  the  vertices $w_1, \ldots, w_l$ in $W$.

  We now list briefly the contents of the paper.
  \S2 is a preliminary section in which we recall some known results
  on bipartite graphs and related results on nonnegative matrices.
  In \S3 we show that the maximum eigenvalue of a bipartite graph
  increases if we replace it by the corresponding chain graph.
  \S4 gives upper estimates on the maximum eigenvalue of chain
  graphs.  In \S5 we discuss a minimal problem related to the sharp
  estimate of chain graphs with two different degrees.
  \S6 discuses a special case of the above minimal problem over the
  integers.  In \S7 we introduce $C$-matrices,
  which can be viewed as continuous
  analogs of the square of the adjacency matrix of chain graphs.
  in \S8 prove Theorem \ref{thm: prfmconj}.

 \section{Preliminaries}\label{sec:prelim}
  \begin{figure}[ht]\label{fig:chaingraph}
    \caption{The chain graph $G_D$ for $D=\set{5,2,2,1}$.}
     \begin{center}
      \begin{pspicture}(8,4)
      \psset{radius=.1}
      \Cnode(0,3.5){v1}
      \Cnode(2,3.5){v2}
      \Cnode(4,3.5){v3}
      \Cnode(6,3.5){v4}
      \Cnode(0,0.5){w1}
      \Cnode(2,0.5){w2}
      \Cnode(4,0.5){w3}
      \Cnode(6,0.5){w4}
      \Cnode(8,0.5){w5}
      \ncline{v1}{w1}
      \ncline{v1}{w2}
      \ncline{v1}{w3}
      \ncline{v1}{w4}
      \ncline{v1}{w5}
      \ncline{v2}{w1}
      \ncline{v2}{w2}
      \ncline{v3}{w1}
      \ncline{v3}{w2}
      \ncline{v4}{w1}
      \rput[t](0,4){$v_1$}
      \rput[t](2,4){$v_2$}
      \rput[t](4,4){$v_3$}
      \rput[t](6,4){$v_4$}
      \rput[t](0,0.1){$w_1$}
      \rput[t](2,0.1){$w_2$}
      \rput[t](4,0.1){$w_3$}
      \rput[t](6,0.1){$w_4$}
      \rput[t](8,0.1){$w_5$}
    \end{pspicture}
   \end{center}
  \end{figure}
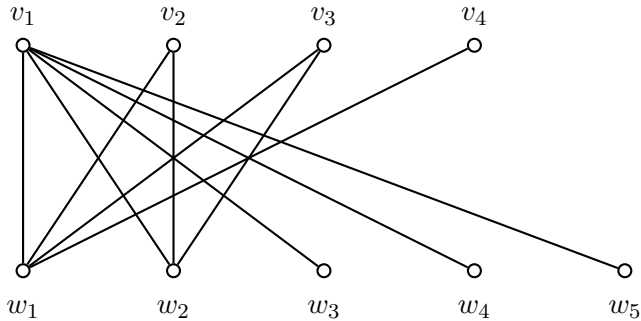

  We now set up some notation and review basic results.
  Denote by $\R^{m\times n}$ the set of $m\times n$ matrices with
  real entries.  We view $A\in\R^{m\times n}$ as
  $A=(A_{i,j})_{i,j=1}^{m,n}$.  Let $G=(V\cup W,E)$ be a bipartite
  graph with $V=\set{v_1,\ldots,v_m},W=\set{w_1,\ldots,w_n}$,
  possibly with isolated vertices.
  We arrange the vertices $V\cup W$ in the order
  $v_1,\ldots,v_m,w_1,\ldots,w_n$.  Then the adjacency  matrix
  $B$ of $G$ is of the form

  \begin{equation}\label{adjmat}
   B = \begin{pmatrix} 0 & A \\ A\trans & 0\end{pmatrix},
  \end{equation}
  where $A$ is an $m \times n$ matrix of $0$'s and $1$'s.
  We call $A$ the \emph{representation matrix} of the bipartite
  graph $G$.  Note that $i-th$ row sum of $A$ is $\deg v_i$ and the
  $j-th$ column sum of $A$ is $\deg w_j$.
  The graph $G$ can be specified
  by specifying the matrix $A$.  Then $G$ does not have isolated
  vertices if and only if $A$ does not zero rows and columns.

  Given $D  =\set{d_1 \geq d_2 \geq \cdots \geq d_m}$, a set of
  positive integers,
  we construct from $D$ the following graph $G_D \in \ms{B}_D$,
  well-known as a chain graph \cite{Yannakakis} or a difference graph \cite{Mahadev-Peled}.
  The vertices of
  $G_D$ are partitioned into $\set{v_1,\ldots,v_m}$ and $\set{w_1,\ldots,w_n}$, $n = d_1$,
  and the neighbors of $v_i$ are $w_1,w_2,\ldots,w_{d_i}$. This is
  illustrated in Figure 1. 

  We now recall the well known spectral properties of the symmetric matrix $B\in \R^{(m+n)\times (m+n)}$
  of the form (\ref{adjmat}), where $A\in \R^{m\times n}_+$, i.e. $A$
  is $m\times n$ matrix with nonnegative entries.
  The spectrum of $B$ is real (by the symmetry of $B$) and symmetric
  around the origin (because if $(\x,\y)$ is an eigenvector for
  $\lambda$, then $(\x,-\y)$ is an eigenvector for $-\lambda$).
  Every real matrix possesses a singular value decomposition (SVD).
  Specifically, if $A$ is $m \times n$ of rank $r$, then there exist
  positive numbers $\sigma_i = \sigma_i(A)$, $i=1,\ldots,r$ (the
  singular values of $A$) and orthogonal matrices $U,V$  of orders
  $m,n$ such that $A = U \Sigma V\trans$, where $\Sigma =
  \text{diag}(\sigma_1,\ldots,\sigma_r,0,\ldots)$ is an $m \times n$
  matrix having the $\sigma_i$ along the main diagonal and otherwise
  zeros. It is possible and usually done to have the $\sigma_i$ in
  non-increasing order. For symmetric matrices the singular values
  are the absolute values of the eigenvalues. The matrix $B$
  from~(\ref{adjmat}) satisfies $B^2 = \twobytwo{A
  A\trans}{0}{0}{A\trans A}$, and so the eigenvalues of $B^2$ are
  those of $AA\trans$ together with those of $A\trans A$. Using the
  SVD for $A$ we see that $AA\trans$ has the $m$ eigenvalues
  $\sigma_1^2,\ldots,\sigma_r^2,0,0,\ldots$ and $A\trans A$ has the
  $n$ eigenvalues $\sigma_1^2,\ldots,\sigma_r^2,0,0,\ldots$. The
  eigenvalues of $B$ are therefore square roots of these numbers,
  and by the symmetry of the spectrum of $B$, the eigenvalues of $B$
  are the $m+n$ numbers $\sigma_1,
  \ldots,\sigma_r,0,\ldots,0,-\sigma_r,\ldots,-\sigma_1$. In
  particular, \emph{the largest eigenvalue of $B$ is $\sigma_1(A)$}.
  We denote this eigenvalue by $\lm(B) = \sigma_1(A)$.  If $B$ is
  the adjacency matrix of $G$ then $\lm(G)=\lm(B)=\sigma_1(A)$.

  For $\x=(x_1,\ldots,x_n)\trans\in\R^n$ we denote by $\norm{\x}=\sqrt{\sum_{j=1}^n
  x_j^2}$, the  Euclidean  norm of $\x$.  For $A\in\R^{m\times n}$ the  operator
  norm of $A$ is given by $\sigma_1(A)=\sqrt{\lm(AA\trans)}=
  \sqrt{\lm(A\trans}A)$.
  We can find $\sigma_1(A)$ by the following maximum principle.
  %
  \begin{equation}\label{quotient}
    \sigma_1(A) = \max_{\stack{\x \in \R^m, \norm{\x} = 1}{\y \in \R^n, \norm{\y} = 1}} \x\trans A
    \y = \max_{\y \in \R^n, \norm{\y}=1} \norm{A\y}.
  \end{equation}
  To see this, consider the SVD $A = U \Sigma V\trans$. Every $\x
  \in \R^m$ with $\norm{\x} = 1$ can be written as $\x = U \a$, $\a
  = (a_1,\ldots,a_m)\trans$, with $\norm{\a} = 1$, and every $\y \in
  \R^n$ with $\norm{\y} = 1$ can be written as $\y = V \b$, $\b =
  (b_1,\ldots,b_n)\trans$, with $\norm{\b} = 1$. Then
  \begin{multline*}
   \x\trans A \y = \a\trans U\trans AV \b
   = \a\trans \Sigma \b
   = \sum_{i=1}^r a_i b_i \sigma_i \leq \sigma_1 \sum_{i=1}^r
   |a_ib_i| \leq
   \\
   \leq \sigma_1 \left(\sum_{i=1}^r a_i^2 \sum_{i=1}^r b_i^2\right)^{\frac{1}{2}}
   \leq \sigma_1 \left(\sum_{i=1}^m a_i^2 \sum_{i=1}^n b_i^2\right)^{\frac{1}{2}}
   = \sigma_1 \norm{\a} \norm{\b} = \sigma_1.
  \end{multline*}
  Equality is achieved when $\x$ is the first column of $U$ and $\y$
  is the first column of $V$, and this proves the first equality
  of~(\ref{quotient}). The second equality is obtained by observing
  that for a given $\y$, the maximizing $\x$ is parallel to $A\y$.

  Another useful fact that can be derived from the SVD $A=U \Sigma
  V\trans$ is the following: if $(\x,\y)$ is an eigenvector of
  $\twobytwo{0}{A}{A\trans}{0}$ belonging to $\sigma_1 > 0$, then $\norm{\x} =
  \norm{\y}$. To see this, observe that $A\y = \sigma_1 \x$ and $A\trans \x = \sigma_1
  \y$. Define vectors $\a = U\trans \x$ and $\b = V\trans \y$. Then
  \begin{gather*}
    \Sigma \b =  \Sigma V\trans \y = U\trans A \y = \sigma_1 U\trans \x =
    \sigma_1 \a
    \\
    \Sigma\trans \a = \Sigma\trans U\trans  \x = V\trans A\trans \x
    = \sigma_1 V\trans \y = \sigma_1 \b.
  \end{gather*}
  It follows that for all $i$ we have $\sigma_i b_i = \sigma_1 a_i$
  and $\sigma_i a_i = \sigma_1 b_i$. Thus $(\sigma_1 + \sigma_i)(b_i-a_i) =
  0$. Since $\sigma_1 + \sigma_i> 0$, it follows that $a_i = b_i$ for
  all $i$, and so $\a = \b$, i.e., $U\trans \x = V\trans \y$. The
  orthogonal matrices $U\trans$ and $V\trans$ preserve the norms,
  and therefore $\norm{\x} = \norm{\y}$.

  Recall the Rayleigh quotient characterization of the largest
  eigenvalue of a symmetric matric $M\in\R^{m\times m}$: $\lambda_{\text{max}}(M) =
  \max_{\norm{\x}=1} \x\trans M \x$. Every $\x$ achieving the
  maximum is an eigenvector of $M$ belonging to
  $\lambda_{\text{max}}(M)$. If the entries of $M$ are non-negative
  ($M \geq 0$), the maximization can be restricted to vectors $\x$
  with non-negative entries ($\x \geq \0$) because $\x\trans M \x
  \leq |\x|\trans M |\x|$ and $\norm{|\x|} = \norm{\x}$, where $|\x|=(|x_1|,
  \ldots,|x_m|)\trans$.

  Recall that a square non-negative matrix $C$ is said to be irreducible
  when some power of $I+C$ is positive (has positive
  entries). Equivalently the digraph induced by $C$ is strongly
  connected. Thus a symmetric non-negative matrix $B$ is irreducible
  when the graph induced by $B$ is connected. For a rectangular
  non-negative matrix $A$, $AA\trans$ is irreducible if and only if
  the bipartite graph with adjacency matrix $B$ given
  by~(\ref{adjmat}) is connected.

  If a symmetric non-negative matrix $B$ is irreducible, then the
  Perron-Frobenius theorem implies that the spectral radius of $B$
  is a simple root of the characteristic polynomial of $B$ and the
  corresponding eigenvector can be chosen to be positive.
  The following result is well known and we bring its proof for
  completeness.

  \begin{prop}\label{upestimlm}  $A=(A_{i,j})_{i,j=1}^{m,n}\in \R^{m\times n}_+$ and assume
  that $B$ is of the form (\ref{adjmat}).  Then
  \begin{equation}\label{upestimlm1}  \lm(B)\leq \sqrt{\sum_{i,j=1}^{m,n}
  A_{i,j}^2}.
  \end{equation}
  Equality holds if and only if either $A=0$ or $A$ is a rank one matrix.
  In particular, if $G$ is a bipartite graph with $e(G)\geq 1$ edges then
  \begin{equation}\label{upestimlm2}  \lm(G)\leq \sqrt{e(G)},
  \end{equation}
  and equality holds if and only if $G_{\rm ni}$ is $K_{p,q}$, where
  $pq=e(G)$.

  \end{prop}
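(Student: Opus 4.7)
The plan is to leverage the singular value identification $\lm(B)=\sigma_1(A)$ established in the SVD paragraph just before the proposition, together with the identity between the Frobenius norm and the sum of squared singular values.

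First I would observe that since $A \in \R^{m\times n}_+$ has singular values $\sigma_1\geq\cdots\geq\sigma_r>0$, the trace identity gives
\[
\sum_{i,j} A_{i,j}^2 = \tr(AA\trans) = \sum_{k=1}^{r} \sigma_k^2 \ge \sigma_1^2 = \lm(B)^2,
\]
which is exactly the inequality (\ref{upestimlm1}). The equality case is immediate from this chain: $\sigma_1^2=\sum_k\sigma_k^2$ forces $\sigma_2=\cdots=\sigma_r=0$, i.e.\ $A$ has rank at most one (the $A=0$ case being the trivial rank-zero possibility).

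Next I would specialize to the graph case. When $A$ is the representation matrix of a bipartite graph, every entry is $0$ or $1$, so $A_{i,j}^2=A_{i,j}$ and $\sum_{i,j}A_{i,j}^2$ equals the number of ones in $A$, which is $e(G)$. This gives (\ref{upestimlm2}). For the equality characterization, I would replace $G$ by $G_{\textrm{ni}}$ (which has the same maximal eigenvalue and the same edge count), so its representation matrix $A$ has no zero row and no zero column. Equality then requires $A$ to have rank exactly one, so $A=\u\v\trans$ for some vectors $\u,\v$. Because the entries of $A$ are in $\{0,1\}$ and no row or column of $A$ vanishes, $\u$ and $\v$ must each be proportional to an all-ones vector, forcing $A$ to be the $p\times q$ all-ones matrix for some $p,q$ with $pq=e(G)$; equivalently, $G_{\textrm{ni}}=K_{p,q}$. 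The converse is immediate since $K_{p,q}$ has $\lm=\sqrt{pq}$.

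The only step that requires a little care is the deduction that a $0/1$ rank-one matrix with no zero rows or columns is necessarily an all-ones block; this follows from writing $A_{i,j}=u_iv_j$ and noting that the $0/1$ constraint together with nontriviality of each row and column forces all nonzero $u_i$ and $v_j$ to take a common value. Everything else is essentially bookkeeping on top of the SVD facts already recorded in the preliminaries.
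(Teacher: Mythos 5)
Your proposal is correct and follows essentially the same route as the paper: both derive \eqref{upestimlm1} from $\tr(AA\trans)=\sum_k\sigma_k^2\geq\sigma_1^2$ with equality precisely for rank $\leq 1$, and both settle the graph case by passing to $G_{\rm ni}$ and arguing that a rank-one $0$-$1$ matrix with no zero rows or columns must be the all-ones block. The only cosmetic difference is that you phrase the last step via $A=\u\v\trans$ while the paper says the rows and columns must all be identical; these are the same observation.
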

   \begin{Proof}  Let $r$  be the rank of $A$.  Recall that the
   positive eigenvalues of $AA\trans$ are
   $\sigma_1(A)^2,\ldots,\sigma_r(A)^2$.  Hence
   $\tr A A\trans=\sum_{i,j}^{m,n} A_{i,j}^2 =\sum_{k=1}^r
   \sigma_k(A)^2\geq \sigma_1(A)^2$.  Combine this equality with the equality $\lm (B) =\sigma_1(A)$
   to deduce (\ref{upestimlm1}). Clearly, equality holds if and only if either
   $r=0$, i.e. $A=0$, or $r=1$.

   Assume now that $G$ is a bipartite graph.  Let $A$ be the
   representation matrix of $G$.  Then $\tr A A\trans=e(G)$.  Hence
   (\ref{upestimlm1}) implies (\ref{upestimlm2}).

   Assume that
   $G= K_{p,q}$.  Then the entries of the representation matrix $A$
   consist of all $1$.  So rank of $A$ is one and $e(K_{p,q})=pq$,
   i.e. equality holds in (\ref{upestimlm2}).  Conversely, suppose that
   $\lm(G)=\sqrt{e(G)}$.  Hence
   $\lm(G_{\textrm{ni}})=\sqrt{e(G_{\textrm{ni}})}$.  Let $C\in \R^{p\times q}$
   be the representation matrix of $G_{\textrm{ni}}$.
   Since satisfies equality in (\ref{upestimlm1}) we deduce
   that $C$ is a rank one matrix.
   But $C$ is $0-1$ matrix that does not have a zero row or column.
   Hence all the rows and columns of
   $C$ must be identical.  Hence all the entries of $C$ are $1$,
   i.e. $G_{\textrm{ni}}$ is a complete bipartite graph with $e(G)$
   edges.
   \end{Proof}

  \section{The Optimal Graphs}
  The aim of this section to prove the following theorem.
  \begin{theo}\label{theo:maintheo}
  Let $D  =\set{d_1 \geq d_2 \geq \cdots \geq d_m}$ be a set of
  positive integers.
  Then the chain graph $G_D$ is the unique graph in $\ms{B}_D$, (up to
   isomorphism), which solves the maximum problem $\max_{G\in \ms{B}_D }
   \lamm(G)$.
  \end{theo}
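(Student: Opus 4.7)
The plan is a compression argument driven by a Perron eigenvector. Given any $G \in \ms{B}_D$ with representation matrix $A \in \set{0,1}^{m \times n}$, I will produce a matrix $A'$ whose canonical form is the representation matrix of $G_D$ and satisfies $\sigma_1(A') \geq \sigma_1(A)$, with equality only when $G \cong G_D$.

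Let $(\x,\y)$ be a nonnegative Perron eigenvector of the adjacency matrix $B$ of (\ref{adjmat}), normalized to $\norm{\y} = 1$, so that $\lm(G) = \sigma_1(A) = \norm{A\y}$ by (\ref{quotient}). Relabelling $W$, I may assume $y_1 \geq y_2 \geq \cdots \geq y_n$. Define $A' \in \set{0,1}^{m \times n}$ by placing $d_i' = \deg_G v_i$ ones in the leftmost $d_i'$ positions of each row $i$. Then for every row,
\[
(A'\y)_i = y_1 + \cdots + y_{d_i'} \geq \sum_{j=1}^n A_{i,j}\, y_j = (A\y)_i,
\]
because the right side is a sum of $d_i'$ entries of the nonnegative vector $\y$, while the left side is the sum of its $d_i'$ largest. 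Both sides are nonnegative, so $\norm{A'\y}^2 \geq \norm{A\y}^2$, and (\ref{quotient}) yields $\sigma_1(A') \geq \norm{A'\y} \geq \norm{A\y} = \sigma_1(A)$. A subsequent row permutation of $A'$ arranging the row sums in non-increasing order does not change $\sigma_1$, and then the last $n - d_1$ columns of the result are identically zero; deleting them corresponds to removing isolated $W$-vertices and again leaves $\sigma_1$ untouched. What remains is exactly the representation matrix of $G_D$, so $\lm(G_D) = \sigma_1(A') \geq \lm(G)$.

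Uniqueness is the main subtlety. Equality $\lm(G) = \lm(G_D)$ forces the row-wise inequality above to be tight for every $i$. The first step is to rule out that $G$ is disconnected: if $V_0$ is the proper subset of $V$ contained in the component that carries the Perron eigenvector, then any $v_i \notin V_0$ contributes $(A\y)_i = 0$ while $(A'\y)_i \geq y_1 > 0$ (since $d_i' \geq 1$ and $y_1 > 0$), a strict gain forbidden by equality. Hence $G$ is connected, and by Perron--Frobenius $\y > 0$ has no zero entries. Tightness then forces row $i$ of $A$ to place its $d_i'$ ones on positions realizing the top $d_i'$ values of $\y$; the only freedom comes from ties $y_j = y_{j'}$, which correspond to twin $W$-vertices of $G_D$ and hence represent isomorphic choices. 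Therefore $G \cong G_D$. The main obstacle in executing this is the bookkeeping for ties and for the disconnected case; the compression inequality itself is immediate from the monotone ordering of $\y$.
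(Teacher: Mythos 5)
Your existence/inequality half is sound and is essentially the paper's Lemma~\ref{theo:easylemma}: sort the columns by the Perron component $\y$, left-justify each row, and compare $\norm{A'\y}$ with $\norm{A\y}$. Your direct treatment of the disconnected case (a vertex of $V$ outside the component supporting the eigenvector gives a strictly positive gain in its row) is a pleasant shortcut past the paper's route, which derives connectivity of an optimal graph from the separate monotonicity result Theorem~\ref{theo:covers}.

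The uniqueness half, however, has a genuine gap. From $\lm(G)=\lm(G_D)$ you correctly conclude that every row-wise inequality $(A'\y)_i\geq(A\y)_i$ is tight, but tightness of the row inequalities does \emph{not} imply $G\cong G_D$, and the ``ties are twins'' step is exactly where it breaks. Take $D=\set{2,2,2}$ and $G=C_6$: the Perron vector is constant, so $\y=\frac{1}{\sqrt3}(1,1,1)\trans$ is totally tied, every row of $A$ picks two of the three (equal) largest entries of $\y$, and all row inequalities are tight --- yet $G\not\cong G_D=K_{3,2}$, and indeed $\lm(C_6)=2<\sqrt6$. The strictness the theorem needs lives in the \emph{first} link of your chain, $\sigma_1(A')\geq\norm{A'\y}$, which is strict whenever $\y$ fails to be the top right singular vector of $A'$; your argument never invokes this. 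Concretely, two assertions are unproved: (i) that a tie $y_j=y_{j'}$ forces $w_j$ and $w_{j'}$ to be twins in $G_D$, i.e.\ that no degree $d_i$ separates $j$ from $j'$ (this must also rule out $n>d_1$, where the extra columns would have to be tied with column $d_1$); and (ii) that distinct rows make \emph{consistent} selections inside a tied block --- otherwise one obtains incomparable neighborhoods and hence a non-chain graph, as in the $C_6$ example. The paper closes both points by using equality throughout (\ref{eqn:betterthanGD}) to conclude that $(\x,\y)$ is also the Perron pair of $\overleftarrow{A}$, deriving the transposed identity $\overleftarrow{A}\trans\x=A\trans\x=\sigma_1\y$, and extracting from the two identities the block orderings (\ref{eq:xordered}) and (\ref{eq:yordered}); these show $d_1=n$ and that the strict drops of $\y$ occur exactly at the positions $r_k$ where the row sums sit, so no row has any freedom in its selection. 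Your outline needs that second, column-side equation to be completed.
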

  Let us call a graph $G\in \ms{B}_D$ \emph{optimal} if it solves the
  maximum problem of the above theorem.
  Our first goal is to prove that every optimal graph is connected.
  For that purpose we partially order the finite sets
  of positive integers as follows.
  \begin{defn}\label{def:cover}
  Let $D = \set{d_1 \geq d_2 \geq \cdots \geq d_m}$ and $D' =
  \set{d'_1 \geq d'_2 \geq \cdots \geq d'_{m'}}$ be sets of $m$ and
  $m'$ positive integers. Then $D > D'$ means that $m \geq m'$, and
  $d_1 \geq d'_1$, $d_2 \geq d'_2$, \ldots, $d_{m'} \geq d'_{m'}$,
  and $D \neq D'$.
  \end{defn}
  \begin{theo}\label{theo:covers}
    If $D > D'$, then $\lambda_{\text{max}}(G_D) >
    \lambda_{\text{max}}(G_{D'})$.
  \end{theo}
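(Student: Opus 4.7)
The plan is to realize $G_{D'}$ as an edge-subgraph of $G_D$ after a natural identification of vertices, and then invoke strict monotonicity of the Perron--Frobenius spectral radius under entry-wise matrix comparison. The chain-graph construction is specifically designed so that this nesting works cleanly.

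Concretely, let $A_D$ and $A_{D'}$ denote the representation matrices of $G_D$ and $G_{D'}$, of sizes $m \times d_1$ and $m' \times d'_1$ respectively. Extend $A_{D'}$ to an $m \times d_1$ matrix $\hat A_{D'}$ by appending $m - m'$ zero rows at the bottom and $d_1 - d'_1$ zero columns on the right. Since $(A_D)_{i,j}=1$ iff $j\leq d_i$ while $(\hat A_{D'})_{i,j}=1$ iff both $i \leq m'$ and $j \leq d'_i$, the hypothesis $d_i \geq d'_i$ (for $i \leq m'$) immediately gives $A_D \geq \hat A_{D'}$ entry-wise. Moreover $A_D \neq \hat A_{D'}$: if $m > m'$ then row $m'+1$ of $A_D$ contains a $1$ (as $d_{m'+1} \geq 1$) while $\hat A_{D'}$ is zero there; if $m=m'$ and some $d_i > d'_i$, then $A_D$ has an extra $1$ in row $i$.

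Form the symmetric $(m+d_1) \times (m+d_1)$ matrices $B_D$ and $\hat B_{D'}$ via~(\ref{adjmat}). Then $B_D \geq \hat B_{D'} \geq 0$ entry-wise with $B_D \neq \hat B_{D'}$. Padding by zero rows and columns only adds zero eigenvalues to the spectrum of $B_{D'}$, so $\lambda_{\max}(\hat B_{D'}) = \lambda_{\max}(B_{D'})$. Next, $G_D$ is connected: $v_1$ is adjacent to every $w_j$ (since $d_1 \geq j$) and $w_1$ is adjacent to every $v_i$ (since $d_i \geq 1$). Hence $B_D$ is irreducible. Finally, invoke the standard strict monotonicity of the Perron spectral radius: if $C$ is nonnegative and irreducible and $C \geq C' \geq 0$ with $C \neq C'$, then $\rho(C) > \rho(C')$. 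Applied with $C = B_D$ and $C' = \hat B_{D'}$, this yields $\lambda_{\max}(G_D) = \rho(B_D) > \rho(\hat B_{D'}) = \lambda_{\max}(G_{D'})$.

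There is no real obstacle here: the chain-graph structure is used only to obtain the entry-wise domination $A_D \geq \hat A_{D'}$ (which would fail for a general $G \in \ms{B}_D$), and the strict monotonicity of the Perron radius is a textbook fact. For completeness, its proof pairs the positive left Perron eigenvector $\u$ of $C$ with a nonnegative Perron eigenvector $\v$ of $C'$ to obtain $(\rho(C)-\rho(C'))\u\trans\v = \u\trans(C-C')\v \geq 0$; equality forces $(C-C')\v = \0$, irreducibility of $C$ then forces $\v > \0$, and $(C-C')\v=\0$ with $\v>\0$ and $C-C'\geq 0$ forces $C=C'$, contradicting $C \neq C'$.
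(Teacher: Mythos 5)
Your proof is correct. It reaches the same conclusion as the paper but by a noticeably different route. The paper squares the adjacency matrix, works with $M = AA\trans$ and $M' = A'A'^{\top}$, and splits into two cases: when $m=m'$ it compares Rayleigh quotients using the Perron vector of $M'$ (exploiting a strictly larger diagonal entry), and when $m>m'$ it passes through the leading $m'\times m'$ principal submatrix $L$ of $M$ and argues that the zero-extension of the maximizer for $L$ cannot be the positive Perron vector of the irreducible $M$. You instead pad $A_{D'}$ with zero rows and columns to match the size of $A_D$, observe the entrywise domination $A_D \geq \hat A_{D'}$ with $A_D \neq \hat A_{D'}$ directly from the left-justified structure, and then invoke (and correctly prove, via the left--right eigenvector pairing $(\rho(C)-\rho(C'))\u\trans\v = \u\trans(C-C')\v$) the general strict monotonicity of the Perron radius under entrywise comparison with an irreducible dominating matrix. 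Your version buys a single uniform argument with no case split, at the price of appealing to a slightly stronger general lemma; the paper's version stays entirely within symmetric Rayleigh-quotient reasoning, which is also the toolkit it reuses later (e.g.\ in Lemma~\ref{theo:easylemma} and the proof of Theorem~\ref{theo:maintheo}), so its two cases are less of a detour in context than they might appear. All the individual steps you assert (connectedness of $G_D$, hence irreducibility of $B_D$; invariance of $\lambda_{\text{max}}$ under zero-padding; positivity of $\u\trans\v$; the forcing chain in the equality case) check out.
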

  \begin{Proof}
    Let $A$ be the $m \times d_1$ matrix of $0$'s and $1$'s with row
    sums $d_1 \geq d_2 \geq \cdots \geq d_m$, and columns ordered so
    that each row is left-justified ($1$'s first, then $0$'s). Then
    $B = \twobytwo{0}{A}{A\trans}{0}$ of order $m+d_1$ is the adjacency matrix of $G_D$.
    Let $A'$ and $B'$ be defined similarly for $G_{D'}$.

    Let $M = BB\trans$ and $M' = B'{B'}\trans$. Then $M$ and
    $M'$ are symmetric non-negative irreducible matrices of orders
    $m$ and $m'$, and $\lambda_{\text{max}}(G_D) =
    \lambda_{\text{max}}(M)$, $\lambda_{\text{max}}(G_{M'}) =
    \lambda_{\text{max}}(M')$.

    \emph{Case 1:} $m=m'$. In this case, by
    Definition~\ref{def:cover}, at least one of the inequalities
    $d_1 \geq d'_1$, $d_2 \geq d'_2$, \ldots, $d_{m} \geq d'_{m}$
    holds with strict inequality. It follows that $M \geq M'$ (i.e.,
    $M-M'$ is a non-negative matrix), and some integer $i\in [1,m]$
    satisfies $M_{i,i} > M'_{i,i}$. Therefore every positive vector
    $\y$ satisfies $\y\trans M \y > \y\trans M '\y$. Let $\y=\x'$ be
    the positive Perron-Frobenius eigenvector of the irreducible
    matrix $M'$, with $\norm{\x'} = 1$. Then by the Rayleigh
    quotient we have
    \[\lambda_{\text{max}}(M) \geq {\x'}\trans M \x' >
    {\x'}\trans M' \x' = \lambda_{\text{max}}(M'),\]
    as required.

    \emph{Case 2:} $m>m'$. In this case, let $L$ be the principal
    submatrix of $M$ consisting of its first $m'$ rows and columns.
    By Definition~\ref{def:cover} we have $d_1 \geq d'_1$, $d_2 \geq
    d'_2$, \ldots, $d_{m'} \geq d'_{m'}$. Therefore $L \geq M'$, and
    hence $\lambda_{\text{max}}(L) \geq \lambda_{\text{max}}(M')$.

    Since $L$ is symmetric and non-negative, there exists a vector
    $\y \in \R^{m'}$ with $\y \geq \0$, $\norm{\y} = 1$ satisfying
    $\lambda_{\text{max}}(L) = \y\trans L \y$. Extend $\y$ with
    zeros to a vector $\x \in \R^m$. Then $\x \geq \0$ and
    $\norm{\x} = 1$ and $\y\trans L \y = \x\trans M \x \leq
    \lambda_{\text{max}}(M)$. Equality cannot occur here, for if it
    did, then $\x$ would be the unique Perron-Frobenius eigenvector
    of the irreducible matrix $M$ and $\x$ would be positive,
    whereas $x_i = 0$ for $i > m'$. Thus $\lambda_{\text{max}}(M) >
    \lambda_{\text{max}}(L) \geq \lambda_{\text{max}}(M')$, as
    required.
  \end{Proof}
  \begin{lem}\label{theo:easylemma}
    If $G \in \ms{B}_D$ is connected, then $\lambda_{\text{max}}(G) \leq
    \lambda_{\text{max}}(G_D)$.
  \end{lem}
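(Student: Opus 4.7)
My approach will combine the Perron eigenvector of $G$ with a row-by-row rearrangement inequality. Because $G$ is connected its adjacency matrix $B$ is irreducible, so there exist positive $\x\in\R^m$ and $\y\in\R^n$ satisfying $A\y=\lambda\x$ and $A\trans\x=\lambda\y$, where $A$ is the representation matrix of $G$ and $\lambda=\lambda_{\max}(G)=\sigma_1(A)$; from the discussion in \S\ref{sec:prelim} we also have $\norm{\x}=\norm{\y}$. My first move is to relabel the $W$-vertices (equivalently, permute the columns of $A$) so that $y_1\geq y_2\geq\cdots\geq y_n$; this preserves the graph up to isomorphism and simply rearranges the entries of $\y$.

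With the $y_j$ sorted, for each row $i$ we have $(A\y)_i=\sum_{j\in N(v_i)}y_j$, which is a sum of $d_i$ among the values $y_1,\ldots,y_n$; by the elementary rearrangement inequality this is at most $\sum_{j=1}^{d_i}y_j$. Defining $A^*$ by $A^*_{ij}=1$ iff $j\leq d_i$, this says $(A\y)_i\leq(A^*\y)_i$ for every $i$. I would then multiply by the nonnegative $x_i$ and sum to obtain $\x\trans A\y\leq\x\trans A^*\y$. The left-hand side equals $\lambda\norm{\x}^2$ because $A\y=\lambda\x$, and for the right-hand side the maximum principle~(\ref{quotient}) applied to $A^*$ gives $\x\trans A^*\y\leq\sigma_1(A^*)\norm{\x}\norm{\y}$. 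Using $\norm{\x}=\norm{\y}$ and cancelling, this chains to $\lambda\leq\sigma_1(A^*)$.

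To finish, I would identify $\sigma_1(A^*)$ with $\lambda_{\max}(G_D)$: the matrix $A^*$ represents the bipartite graph in which each $v_i$ is joined to $w_1,\ldots,w_{d_i}$, and after reordering the $V$-vertices by decreasing $d_i$ and discarding the (possibly empty) isolated set $w_{d_1+1},\ldots,w_n$ this is exactly the chain graph $G_D$; neither operation changes the largest singular value. Hence $\lambda_{\max}(G)\leq\sigma_1(A^*)=\lambda_{\max}(G_D)$, as desired. The one step that genuinely needs to be justified is the uniform across-rows rearrangement: I am freely relocating the $1$'s of each row of $A$ into the leftmost positions without regard to column sums, but this is legitimate precisely because $\ms{B}_D$ constrains only the $V$-side degree sequence and leaves the $W$-side column sums free.
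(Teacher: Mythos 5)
Your argument is correct and is essentially the paper's own proof: the matrix $A^*$ you construct is exactly the left-justified matrix $\overleftarrow{A}$ used there, and the chain $\lambda\norm{\x}\norm{\y}=\x\trans A\y\leq\x\trans\overleftarrow{A}\y\leq\sigma_1(\overleftarrow{A})\norm{\x}\norm{\y}$, together with $\norm{\x}=\norm{\y}$ and the identification $\sigma_1(\overleftarrow{A})=\lambda_{\text{max}}(G_D)$, is precisely the paper's display~(\ref{eqn:betterthanGD}). The only cosmetic difference is that the paper normalizes $\norm{\x}=\norm{\y}=1$ rather than cancelling the norms at the end.
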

  \begin{Proof}
    Let $D = \set{d_1\geq \cdots \geq d_m}$ and $n \geq d_1$. Let $B
    = \twobytwo{0}{A}{A\trans}{0}$ be the adjacency matrix of $G$, where $A$ is $m \times n$
    with row sums given by $D$. Since $G$ is connected, $B$ is
    irreducible. Let $(\x,\y)$ be the positive Perron-Frobenius
    eigenvector of $B$ belonging to $\lambda_{\text{max}}(G) =
    \sigma_1(A)$, with $\x = (x_1,\ldots,x_m)$, $\y =
    (y_1,\ldots,y_n)$:
    \begin{equation}\label{eq:Bxy}
     \sigma_1(A) \twobyone{\x}{\y} =  \twobytwo{0}{A}{A\trans}{0} \twobyone{\x}{\y},
    \end{equation}
    and so $A\y = \sigma_1(A)\x$.

    As we observed in Section~\ref{sec:prelim}, we have $\norm{\x} =
    \norm{\y}$, and so we may choose a normalization such that $\norm{\x} =
    \norm{\y} = 1$.

    We reorder the columns of $A$ so that $y_1 \geq
    y_2 \geq \cdots \geq y_n > 0$.
    The
    rows are still in their original order, and so the row sums are
    $d_1\geq \cdots \geq d_m$ in this order.

    Let $\overleftarrow{A}$ be the matrix obtained from $A$ by
    left-justifying each row, i.e., moving all the $1$'s of the row
    to the beginning of the row. Then
    $\twobytwo{0}{\overleftarrow{A}}{\overleftarrow{A}\trans}{0}$ is
    the adjacency matrix of $G_D$ with $n-d_1$ zero rows and columns
    appended at the end, and therefore $\lambda_{\text{max}}(G_D) =
    \sigma_1(\overleftarrow{A})$.

    Since $y_1 \geq y_2 \geq \cdots \geq y_n \geq 0$ and since
    $\overleftarrow{A}$ is obtained from $A$ by left-justifying each
    row, we have $\overleftarrow{A}\y \geq A\y$. Since $\x \geq \0$,
    we have $\x\trans \overleftarrow{A}\y \geq \x\trans A \y$.
    (\ref{quotient}) yields
    %
    \begin{multline}\label{eqn:betterthanGD}
     \lambda_{\text{max}}(G_D) = \sigma_1(\overleftarrow{A})
     = \max_{\stack{\u \in \R^m, \norm{\u} = 1}{\v \in \R^n, \norm{\v} = 1}}
     \u\trans \overleftarrow{A} \v \geq \x\trans \overleftarrow{A}\y \geq \x\trans A \y
     \\
     = \x\trans \sigma_1(A) \x = \sigma_1(A) = \lambda_{\text{max}}(G).
    \end{multline}
  \end{Proof}
  \begin{lem}\label{theo:connected}
   An optimal graph must be connected.
  \end{lem}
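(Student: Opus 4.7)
The plan is to proceed by contradiction: assume $G \in \ms{B}_D$ is optimal yet disconnected, and produce a graph in $\ms{B}_D$ with strictly larger largest eigenvalue, namely $G_D$ itself.

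First I would decompose $G$ into its connected components $G_1, \ldots, G_k$ with $k \geq 2$. Because $G$ has no isolated vertices and is bipartite, every component contains at least one vertex on each side; in particular each component has at least one vertex in $V$. Writing $D_i$ for the sorted degree sequence of $V \cap V(G_i)$, the multisets $D_1, \ldots, D_k$ form a partition of $D$. Since the adjacency matrix of $G$ is block-diagonal with one block per component, $\lamm(G) = \max_i \lamm(G_i)$; fix an index $i^\ast$ attaining this maximum.

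Next I would invoke Lemma~\ref{theo:easylemma} on the connected bipartite graph $G_{i^\ast} \in \ms{B}_{D_{i^\ast}}$ to get $\lamm(G_{i^\ast}) \leq \lamm(G_{D_{i^\ast}})$, and then compare $G_{D_{i^\ast}}$ with $G_D$ via Theorem~\ref{theo:covers}. For that I must verify $D > D_{i^\ast}$ in the partial order of Definition~\ref{def:cover}. Two observations suffice: since $k \geq 2$ and each component contributes at least one $V$-vertex, $|D_{i^\ast}| < m = |D|$; and since $D_{i^\ast}$ is a sub-multiset of $D$, the $j$-th largest entry of $D_{i^\ast}$ is bounded above by the $j$-th largest entry of $D$ for every $j \leq |D_{i^\ast}|$ (the top $j$ entries of $D_{i^\ast}$ already provide $j$ entries of $D$ that are at least as large). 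Thus $D > D_{i^\ast}$ strictly, and Theorem~\ref{theo:covers} gives $\lamm(G_{D_{i^\ast}}) < \lamm(G_D)$. Chaining,
\[
\lamm(G) \;=\; \lamm(G_{i^\ast}) \;\leq\; \lamm(G_{D_{i^\ast}}) \;<\; \lamm(G_D),
\]
which contradicts the optimality of $G$ inside $\ms{B}_D$ (note $G_D \in \ms{B}_D$).

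I do not expect any real obstacle: the argument is a bookkeeping exercise combining the block-diagonal structure of the spectrum of a disconnected graph with the two tools already established in this section. The only point worth double-checking is the entrywise inequality between the sorted sub-multiset $D_{i^\ast}$ and $D$, which is a direct consequence of the sub-multiset relation.
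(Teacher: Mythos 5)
Your proposal is correct and follows essentially the same route as the paper: decompose into components, take the component attaining the maximum eigenvalue, apply Lemma~\ref{theo:easylemma} to it and Theorem~\ref{theo:covers} to compare $G_{D_{i^\ast}}$ with $G_D$, and contradict optimality. The only difference is cosmetic — you spell out the verification that $D > D_{i^\ast}$ (which the paper merely asserts), and you arrange the inequality chain ending at $\lamm(G_D)$ rather than starting from it.
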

  \begin{Proof}
    Let $G \in \ms{B}_D$ be an optimal graph. The graph $G$ is
    bipartite, and one side of the bipartition (call it the first
    side) has degrees given by $D$. Let $G_1$, \ldots, $G_k$ be the
    connected components of $G$. Then $\lambda_{\text{max}}(G) =
    \lambda_{\text{max}}(G_i)$ for some $i$.

    Like $G$, the component $G_i$ is also bipartite with the
    bipartition inherited from that of $G$. Let $D_i$ be the set of
    degrees of $G_i$ on the first side of the bipartition.

    If $G$ is disconnected, then $D > D_i$, and therefore
    $\lambda_{\text{max}}(G) \geq \lambda_{\text{max}}(G_D) >
    \lambda_{\text{max}}(G_{D_i}) \geq \lambda_{\text{max}}(G_i)$,
    where the first inequality is by the optimality of $G$, the
    second by Theorem~\ref{theo:covers}, and the third by
    Lemma~\ref{theo:easylemma} and the connectivity of $G_i$. This
    contradicts the equality above and proves that $G$ must be connected.
  \end{Proof}
  We are now ready to prove our main theorem.

  \begin{Proof}
    (of Theorem~\ref{theo:maintheo}). Let $G \in \ms{B}_D$ be
    optimal with adjacency matrix $B=\twobytwo{0}{A}{A\trans}{0}$.
    By Lemma~\ref{theo:connected} $G$ is connected. We begin as in
    the proof of Lemma~\ref{theo:easylemma}. We let $(\x,\y)$ be
    the positive Perron-Frobenius eigenvector of $B$ belonging to
    $\lambda_{\text{max}}(G) = \sigma_1(A)$, with $\x =
    (x_1,\ldots,x_m)\trans$, $\y = (y_1,\ldots,y_n)\trans$, $\norm{\x} =
    \norm{\y} = 1$. In other words, (\ref{eq:Bxy}) holds, or
    equivalently
    \begin{gather}\label{eq:Bxyyx}
      A \y = \sigma_1(A) \x
      \\
      A\trans \x = \sigma_1(A) \y
    \end{gather}
    However, this time we reorder both the rows and the columns of
    $A$ so that $x_1 \geq x_2 \geq \cdots \geq x_m > 0$ and $y_1
    \geq y_2 \geq \cdots \geq y_n > 0$, so now the row sums of $A$,
    which we still denote by $d_1,d_2,\ldots,d_m$, are not
    necessarily non-decreasing. As before, we let
    $\overleftarrow{A}$ be the matrix obtained from $A$ by
    left-justifying each row. The graph with adjacency matrix
    $\twobytwo{0}{\overleftarrow{A}}{\overleftarrow{A}\trans}{0}$ is
    still isomorphic to $G_D$ plus $n-d_1$ isolated vertices, and
    therefore $\lambda_{\text{max}}(G_D) =
    \sigma_1(\overleftarrow{A})$. For the same reasons as before we
    have $\overleftarrow{A}\y \geq A\y$, and therefore
    (\ref{eqn:betterthanGD}) holds. Moreover, by the optimality
    of $G$ we have equality throughout~(\ref{eqn:betterthanGD}). In
    particular $G_D$ is optimal and $\lambda_{\text{max}}(G_D) =
    \sigma_1(A)$, so from now on we abbreviate $\sigma_1(A) =
    \sigma_1(\overleftarrow{A})= \sigma_1$. Now $\overleftarrow{A}\y
    \geq A\y$ and $\x\trans \overleftarrow{A}\y = \x\trans A \y$ and
    $\x > \0$ give
    \begin{equation}\label{eq:tightAy}
      \overleftarrow{A}\y = A\y = \sigma_1\x.
    \end{equation}
    The first two rows of~(\ref{eq:tightAy}) and $x_1 \geq x_2$ now give
    \[y_1 + \cdots + y_{d_1} = \sigma_1 x_1 \geq \sigma_1 x_2 =y_1 + \cdots + y_{d_2}, \]
    and since $\y > \0$ we must have $d_1 \geq d_2$. The same
    argument with rows 2 and 3 shows $d_2 \geq d_3$, and so on. We
    have established that the row sums of $A$ are non-decreasing,
    i.e.,
    \begin{equation}\label{eq:nondecreasing}
     d_1 \geq d_2 \geq \cdots \geq d_m.
    \end{equation}
    Note that by~(\ref{eq:nondecreasing}), the columns of
    $\overleftarrow{A}$ are top-justified, i.e., the $1$'s are above
    the $0$'s. For this reason and $\x \geq \0$ we have
    $\overleftarrow{A}\trans \x \geq A\trans\x$, and hence $\y\trans
    \overleftarrow{A}\trans \x \geq \y\trans A\trans\x$ by $\y \geq
    \0$. The analog of~(\ref{eqn:betterthanGD}) for
    $\overleftarrow{A}\trans$ now holds with equality throughout and
    we obtain
    \begin{equation}\label{eq:tightATx}
      \overleftarrow{A}\trans\x = A\trans\x = \sigma_1\y.
    \end{equation}

    Our remaining task is to show that $d_1 = n$ and $A =
    \overleftarrow{A}$, and therefore $G$ is isomorphic to $G_D$.
    For that purpose we need notation for rows of
    $\overleftarrow{A}$ with equal sums, and similarly for columns.

    We introduce the following notation for the row sums of
    $\overleftarrow{A}$:
    \begin{gather}\label{eq:rownotation}
      \begin{split}
       r_1 = d_1 = \cdots = d_{m_1} > r_2 = d_{m_1 + 1} = \cdots = d_{m_1 +
       m_2} > \cdots >
       \\
        > r_h = d_{m_1 + \cdots + m_{h-1}+1} = \cdots = d_{m_1 + \cdots +
        m_h},
      \end{split}
      \\
      \intertext{where}
      m_1 + \cdots + m_h = m. \notag
    \end{gather}
    This is illustrated in Figure~\ref{fig:Ferrers}.
    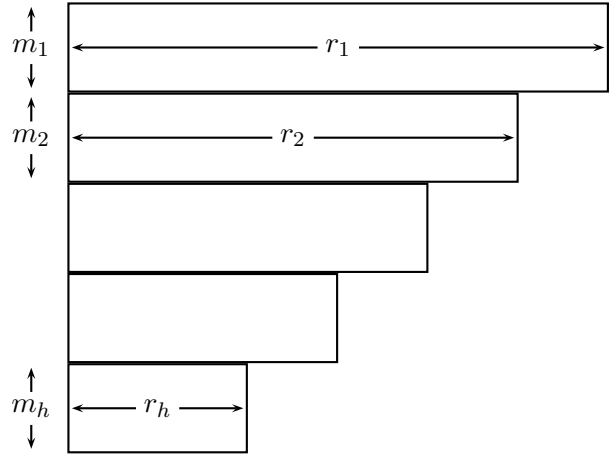
\begin{figure}[ht]
      \caption{The notation for the row sums of $\protect\overleftarrow{A}$.}
      \label{fig:Ferrers}
       \begin{center}
        \psset{unit=1.2cm}
        \begin{pspicture}(7,5)
         \psframe(1,0)(3,1)
         \psframe(1,1)(4,2)
         \psframe(1,2)(5,3)
         \psframe(1,3)(6,4)
         \psframe(1,4)(7,5)
         \psline{<->}(1.05,.5)(2.95,.5)
         \rput*(2,.5){$r_h$}
         \psline{<->}(1.05,3.5)(5.95,3.5)
         \rput*(3.5,3.5){$r_2$}
         \psline{<->}(1.05,4.5)(6.95,4.5)
         \rput*(4,4.5){$r_1$}
         \psline{<->}(0.6,.05)(0.6,.95)
         \rput*(0.6,.5){$m_h$}
         \psline{<->}(0.6,3.05)(0.6,3.95)
         \rput*(0.6,3.5){$m_2$}
         \psline{<->}(0.6,4.05)(0.6,4.95)
         \rput*(0.6,4.5){$m_1$}
      \end{pspicture}
     \end{center}
    \end{figure}

    From~(\ref{eq:tightAy}) we have $\sigma_1 x_i = (\overleftarrow{A} \y)_i = y_1 + \cdots +
    y_{d_i}$. Therefore by~(\ref{eq:rownotation}) and $\y > \0$ we
    obtain
    \begin{equation}\label{eq:xordered}
      \begin{split}
        x_1 = \cdots = x_{m_1} > x_{m_1+1} = \cdots = x_{m_1+m_2} >
        \cdots >
        \\
        x_{m_1+ \cdots + m_{h-1}+1} = \cdots = x_{m_1 + \cdots +
        m_h} > 0.
      \end{split}
    \end{equation}
    Analogously using~(\ref{eq:tightATx}) and (\ref{eq:rownotation})
    and $\x > \0$ we obtain
    \begin{equation}\label{eq:yordered}
      \begin{split}
        y_1 = \cdots = y_{r_h} > y_{r_h+1} = \cdots = y_{r_{h-1}} > \cdots >
        \\
        y_{r_2 + 1} = \cdots = y_{r_1} > 0
        = y_{r_1+1} = y_{r_1+2} = \cdots = y_n.
      \end{split}
    \end{equation}
    From~(\ref{eq:yordered}) and $\y > \0$, we conclude that
    \[d_1 = r_1 = n.\]

    We are now ready to show that $A = \overleftarrow{A}$. Since $d_1 =
    r_1 = n$, the first $m_1$ rows of $A$ are all-$1$, and so are
    the first $m_1$ rows of $\overleftarrow{A}$. Now let $m_1 + 1
    \leq i \leq m_1 + m_2$ be an index of one of the next $m_2$
    rows. Both $A$ and $\overleftarrow{A}$ have $d_i = r_2$ $1$'s in
    row $i$. Let the $1$'s in row $i$ of $A$ lie in columns
    $k_1,\ldots,k_{r_2}$. Then by~(\ref{eq:tightAy}) we have
    \begin{equation}\label{eq:secondrowscoincide}
     \sum_{j=1}^{r_2} y_{k_j} = (A\y)_i =
     (\overleftarrow{A}\y)_i = \sum_{j=1}^{r_2} y_j.
    \end{equation}
    However, by~(\ref{eq:yordered}) the last $r_1 - r_2$ components
    of $\y$ are smaller than all other components. Therefore if any
    $k_j$ lies in the range $\set{r_2 + 1,\ldots, r_1}$, it would
    follow that $\sum_{j=1}^{r_2} y_{k_j} < \sum_{j=1}^{r_2} y_j$,
    contradicting~(\ref{eq:secondrowscoincide}). Therefore $k_j = j$
    for $j = 1 \ldots, r_2$, in other words rows $i$ of $A$ and
    $\overleftarrow{A}$ are the same.

    An analogous argument can be applied to the next $m_3$ rows, and
    so on, and it follows that $\overleftarrow{A} = A$.
  \end{Proof}

   The arguments of the proof of the above theorem yield.
   \begin{corol}\label{cor:optsolmpr}  Let the assumptions of
   Problem \ref{bipprob} holds.  Then any $H\in\cK(p,q,e)$
   satisfying  $\max_{G\in\cK(p,q,e)} \lm(G)=\lm(H)$ is isomorphic
   to $G_D$, for some $D  =\set{d_1 \geq d_2 \geq \cdots \geq d_m}$,
   where $m\leq p$ and $d_1\leq q$.

   \end{corol}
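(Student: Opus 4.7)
The plan is to apply Theorem~\ref{theo:maintheo} directly to the degree sequence of the extremal graph. Let $H\in\cK(p,q,e)$ realize the maximum in~\eqref{maxfeigp}, and let $D:=D(H)=\set{d_1\geq\cdots\geq d_m}$ denote its rearranged $V$-side degree sequence. Because $H$ has no isolated vertex and is a subgraph of $K_{p,q}$, we have $d_i\geq 1$ for each $i$, $m\leq p$, and $d_1\leq q$, which are precisely the bounds claimed in the conclusion. Since $H\in\ms{B}_D$, Theorem~\ref{theo:maintheo} immediately yields
\[
  \lm(H)\leq\lm(G_D),
\]
with equality if and only if $H\cong G_D$.

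To extract the reverse inequality, I would argue that $G_D$ itself lies in $\cK(p,q,e)$: it has $e=\sum_i d_i$ edges, no isolated vertices by construction, and bipartition sizes $(m,d_1)$ that fit inside $(p,q)$. Once this is granted, the extremality of $H$ forces $\lm(H)\geq\lm(G_D)$; the two inequalities collapse, and Theorem~\ref{theo:maintheo} delivers $H\cong G_D$, which gives the conclusion.

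The main obstacle is the degenerate case in which $D$ is constant: then $G_D=K_{m,d_1}$ is complete bipartite and is excluded from $\cK(p,q,e)$ by definition. Here one must show $H$ cannot be extremal. Proposition~\ref{upestimlm} gives $\lm(H)<\sqrt{e}=\lm(G_D)$ strictly (because $H$ is not complete bipartite), and one would then produce a non-constant $D'$ of the same length and same sum as $D$---for instance $D'=(d+1,d,\ldots,d,d-1)$, valid whenever $m\geq 2$ and $d<q$ (the case $d=q$ is impossible since it would already force $H=K_{m,q}$)---and show, via a short eigenvalue comparison for $d$-regular bipartite graphs versus the chain graph $G_{D'}$, that $G_{D'}\in\cK(p,q,e)$ satisfies $\lm(G_{D'})>\lm(H)$, contradicting the optimality of $H$. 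This perturbation estimate is the one non-routine ingredient in an otherwise immediate deduction from Theorem~\ref{theo:maintheo}.
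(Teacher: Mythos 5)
Your main line---apply Theorem~\ref{theo:maintheo} to $D=D(H)$, observe that $G_D$ embeds in $K_{p,q}$ with $m\leq p$ and $d_1\leq q$, and squeeze $\lm(H)=\lm(G_D)$ from the two-sided extremality---is exactly the paper's (unwritten) argument, and you are right that the only delicate point is the degenerate case where $D$ is constant, so that $G_D=K_{m,d}$ is complete bipartite and falls outside $\cK(p,q,e)$; the paper passes over this in silence. However, your resolution of that case has a genuine gap. The inequality you need, $\lm(G_{D'})>\lm(H)$ with $D'=(d+1,d,\ldots,d,d-1)$, must hold for \emph{every} non-complete $H$ whose $V$-side degrees all equal $d$, yet for such $H$ the value $\lm(H)$ is not determined by $d$ alone: the $W$-side degrees can be anything summing to $md$, and the natural bound $\lm(H)^2\leq d\cdot\max_j\deg w_j$ can equal $e$ (e.g.\ when some $w_j$ is adjacent to all of $V$), so there is no single quantity against which a ``short eigenvalue comparison'' for $G_{D'}$ can be run. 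Moreover, when $d=1$ your $D'$ contains the entry $0$ and is not an admissible degree set. As written, the degenerate case---and hence the corollary---is not established.

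A repair that stays inside the paper's toolkit: in the degenerate case apply Theorem~\ref{theo:maintheo} a second time with the roles of $V$ and $W$ exchanged, i.e.\ to the $W$-side degree sequence $C$ of $H$. If $C$ is non-constant, then $G_C$ is a non-complete chain graph with $n\leq q$ vertices on one side and $c_1\leq m\leq p$ on the other, hence lies in $\cK(p,q,e)$, and extremality forces $H\cong G_C$; but a connected chain graph that is not complete bipartite has non-constant degrees on \emph{both} sides, contradicting the constancy of $D$, so this sub-case cannot occur. What remains is $H$ biregular of degrees $(d,c)$ with $md=nc=e$ and $mn>e$, where every component is $(d,c)$-biregular and $\lm(H)^2=dc=e^2/(mn)<e$ is known exactly; only there does a concrete comparison (for instance, the Rayleigh quotient of $A'A'^{\top}$ at the all-ones vector gives $\lm(G_{D'})^2\geq md-2+\frac{2}{m}$, which exceeds $dc\leq \frac{md\cdot d}{d+1}$) close the argument. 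Supplying this, or an equivalent treatment of the constant-$D$ case, is what your proposal is missing.
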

  \section{Estimations of the Largest Eigenvalue}
  In this section we give lower and upper bounds for
  $\lambda_{\text{max}}(G)$, where $G$ is an optimal graph with
  a given adjacency matrix $\twobytwo{0}{A}{A\trans}{0}$.
  (Our upper bound improves the upper bound (\ref{upestimlm2}).)

  Recall the concept of the second compound matrix $\Lambda_2 A$ of
  an $m \times n$ matrix $A = (A_{i,j})$ \cite{Gantmacher}: $\Lambda_2 A$ is
  an $\binom{m}{2} \times
  \binom{n}{2}$ matrix with rows indexed by $(i_1,i_2)$, $1 \leq i_1 < i_2 \leq m$
  and columns indexed by $(j_1,j_2)$, $1 \leq j_1 < j_2 \leq n$. The
  entry in row $(i_1,i_2)$ and column $(j_1,j_2)$ of $\Lambda_2 A$ is given by
  \begin{equation}\label{Lambda2}
    \Lambda_2A_{(i_1,i_2)(j_1,j_2)} = \det
    \begin{pmatrix}
      A_{i_1,j_1} A_{i_1,j_2}
      \\
      A_{i_2,j_1} A_{i_2,j_2}
    \end{pmatrix}.
  \end{equation}
  Note that $(\Lambda_2 A)\trans=\Lambda_2 A\trans$.
  It follows from the Cauchy-Binet theorem that for matrices $A,B$
  of compatible dimensions one has $\Lambda_2 (AB) = (\Lambda_2 A)
  (\Lambda_2 B)$. One also has $\Lambda_2 I = I$ and therefore
  $\Lambda_2 A^{-1} = (\Lambda_2 A)^{-1}$ for nonsingular $A$. In
  particular, the second compound matrix of an orthogonal matrix is
  orthogonal, and therefore the SVD carries over to the second
  compound: if the SVD of $A$ is $A = U \Sigma V\trans$, then the
  SVD of $\Lambda_2 A$ is $(\Lambda_2 A) = (\Lambda_2 U) (\Lambda_2
  \Sigma) (\Lambda_2 V)\trans$. It follows that if the singular
  values of $A$ are $\sigma_1 \geq \sigma_2 \geq \cdots$, then the
  singular values of $\Lambda_2A$ are $\sigma_i \sigma_j$, $i < j$.
  In particular, when the rank of $A$ is larger than $1$,
  equivalently $\Lambda_2A \neq 0$, we have
  \begin{equation}\label{sigma1sigma2}
    \sigma_1 \sigma_2 = \sigma_1(\Lambda_2 A) = \max_{\w \neq \0} \frac{\norm{(\Lambda_2
    A)\w}}{\norm{\w}},
  \end{equation}
  where the second equality follows by applying~(\ref{quotient}) to $\Lambda_2
  A$.

  We now specialize to $A$ given by~(\ref{adjmat}), which is the
  adjacency matrix of an optimal graph. Thus $A$ is a matrix of
  $0$'s and $1$'s whose rows are left-justified and whose columns
  are top-justified. We use the notation~(\ref{eq:rownotation}) for
  the row sums of $A$. For such $A$ the entries of $\Lambda_2 A$ can
  only be $0$ or $-1$. Indeed, if in~(\ref{Lambda2}) $A_{i_2,j_2} =
  1$, then $A_{i_1,j_2} = A_{i_2,j_1} = A_{i_1,j_1} = 1$ and the
  determinant vanishes. If $A_{i_2,j_2} = 0$ and the determinant does
  not vanish, then again $A_{i_1,j_2} = A_{i_2,j_1} = A_{i_1,j_1} = 1$
  and the determinant equals $-1$. In the latter case we say that
  $(i_1,i_2)$ and $(j_1,j_2)$ are in a \emph{$\Gamma$-configuration}.

  To estimate $\sigma_1 \sigma_2$ from below, we take a particular
  column vector $\w$ in~(\ref{sigma1sigma2}): the $(j_1,j_2)$, $j_1
  < j_2$ entry of $\w$ is $1$ if column $(j_1,j_2)$ of $\Lambda_2 A$
  is nonzero; otherwise this entry of $\w$ is zero. (The assumption
  $\Lambda_2 A \neq 0$ implies that $\w \neq \0$.)
  By~(\ref{sigma1sigma2}) we have
  \begin{equation}\label{eq:fraction}
   \sigma_1 \sigma_2 \geq \frac{\norm{(\Lambda_2 A)\w}}{\norm{\w}}.
  \end{equation}

  Since $\w$ is a vector of $0$'s and $1$'s, $\norm{\w}^2$ is the
  number of nonzero entries of $\w$, that is to say, the number of
  nonzero columns of $\Lambda_2 A$. We count the nonzero columns
  $(j_1,j_2)$, $j_1 < j_2$ of $\Lambda_2 A$ as follows. Fix $j_2$.
  There is a unique $k = 1,\ldots,h-1$ such that $r_{k+1} + 1 \leq j_2 \leq
  r_k$. If $j_1$ is chosen among $1,\ldots,r_{k+1}$, then there
  exist $(i_1,i_2)$ such that $(i_1,i_2)$ and $(j_1,j_2)$ are in a
  $\Gamma$-configuration, and otherwise not. It follows that
  for our fixed $j_2$, there are $r_{k+1}$ values of $j_1$ such that
  column $(j_1,j_2)$ of $\Lambda_2 A$ is nonzero. We can vary $j_2$
  without changing $k$ in $r_k - r_{k+1}$ ways, so $\Lambda_2 A$ has $r_{k+1}(r_k -
  r_{k+1})$ nonzero columns corresponding to the same $k$. Summing
  over $k$, we conclude that
  \begin{equation}\label{eq:denominator}
    \norm{\w} ^2 = \sum_{k=1}^{h-1} r_{k+1}(r_k - r_{k+1}).
  \end{equation}
  By similar arguments we see that for $1 \leq k < l \leq h$, the
  vector $(\Lambda_2 A)\w$ has $m_k m_l$ entries equal to $-r_l (r_k -
  r_l)$, and that all other entries of $(\Lambda_2 A)\w$ vanish.
  Therefore
  \begin{equation}\label{eq:numerator}
    \norm{(\Lambda_2 A)\w}^2 = \sum_{1 \leq k < l \leq h}
    m_k m_l [r_l (r_k - r_l)]^2.
  \end{equation}
  From~(\ref{eq:fraction}), (\ref{eq:denominator}) and
  (\ref{eq:numerator}), we obtain
  \begin{equation}\label{eq:omega}
    \sigma_1^2 \sigma_2^2 \geq \omega \equiv \frac{\sum_{1 \leq k < l \leq h}
    m_k m_l [r_l (r_k - r_l)]^2}{\sum_{k=1}^{h-1} r_{k+1}(r_k -
    r_{k+1})}.
  \end{equation}
  As we have noted above, we assume that $h > 1$, for otherwise $A$
  has rank $1$. If $h=1$ we define $\omega = 0$.

  We can improve the lower bound~(\ref{eq:omega}) by the following
  consideration. The graph with adjacency matrix
  $\twobytwo{0}{A\trans}{A}{0}$ is isomorphic to the one with
  adjacency matrix $\twobytwo{0}{A}{A\trans}{0}$. Therefore we can
  repeat the work in this section with $A\trans$ replacing $A$. This
  amounts to transposing the Ferrers diagram illustrated in
  Figure~\ref{fig:Ferrers}. Instead of~(\ref{eq:omega}) we now have
  \begin{equation}\label{eq:omegaprime}
    \sigma_1^2 \sigma_2^2 \geq \omega' \equiv \frac{\sum_{1 \leq k < l \leq h}
    m'_k m'_l [r'_l (r'_k - r'_l)]^2}{\sum_{k=1}^{h-1} r'_{k+1}(r'_k -
    r'_{k+1})},
  \end{equation}
  where for $i=1,\ldots,h$ we have $r'_i = m_1+ \cdots + m_{h-i+1}$ and $m'_i =
  r_{h-i+1}-r_{h-i+2}$ ($r_{h+1}=0$).

  Combining~(\ref{eq:omega}) and (\ref{eq:omegaprime}) we obtain
  \begin{equation}\label{eq:omegastar}
    \sigma_1^2 \sigma_2^2 \geq \omega^* \equiv \max \set{\omega,\omega'}.
  \end{equation}

  We are now ready to estimate $\sigma_1^2$ from above.
  \begin{theo}\label{theo:estimate}  Let $D = \set{d_1 \geq d_2 \geq \cdots \geq
  d_m}$ be a set of positive integers, where $d_1>d_m$.  Then
  \begin{equation}\label{upestopg}
  \lm (G_D)^2 \leq \frac{e(G_D) + \sqrt{e(G_D)^2 -4\omega^*(G_D)}}{2},
  \end{equation}
  where
  $\omega^*(G_D)$ is defined
  in~(\ref{eq:omegastar}). Assume that in
  the Ferrers diagram given in
  Figure~\ref{fig:Ferrers} $h=2$. I.e. the degree of the vertices in each group of $G_D$
  have exactly two distinct values.  Then equalities hold in
  (\ref{eq:omega}), (\ref{eq:omegaprime}), (\ref{eq:omegastar}) and (\ref{upestopg}).
  In particular
  \begin{equation}\label{eq:omegeq}
  \omega^*(D)=\omega=\omega'= m_1m_2r_2(r_1-r_2).
  \end{equation}

  \end{theo}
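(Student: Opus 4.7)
The plan is to derive the upper bound in~(\ref{upestopg}) by combining two facts about the singular values $\sigma_1 \geq \sigma_2 \geq \cdots \geq \sigma_r$ of the representation matrix $A$ of $G_D$: the identity $\sigma_1^2 + \sigma_2^2 + \cdots + \sigma_r^2 = \tr(AA\trans) = e(G_D)$, which gives in particular $\sigma_1^2 + \sigma_2^2 \leq e(G_D)$, and the lower bound $\sigma_1^2 \sigma_2^2 \geq \omega^*(G_D)$ established in~(\ref{eq:omegastar}). The hypothesis $d_1 > d_m$ ensures $h \geq 2$, so $\Lambda_2 A \neq 0$ and $\omega^*$ is well defined. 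Substituting $\sigma_2^2 \geq \omega^*(G_D)/\sigma_1^2$ yields
\[
\sigma_1^2 + \frac{\omega^*(G_D)}{\sigma_1^2} \leq e(G_D),
\]
which is a quadratic inequality in $\sigma_1^2$; since $\sigma_1^2 \geq \sigma_2^2$, the quantity $\sigma_1^2$ lies in the upper half of the feasible interval, so it is bounded by the larger root $\bigl(e(G_D) + \sqrt{e(G_D)^2 - 4\omega^*(G_D)}\bigr)/2$, which is~(\ref{upestopg}).

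For the equality statement under $h = 2$, I would verify the two inequalities just used. First, with only two distinct row sums the matrix $A$ has exactly two distinct rows, hence $\mathrm{rank}(A) = 2$ and $\sigma_3 = \cdots = \sigma_r = 0$, so $\sigma_1^2 + \sigma_2^2 = e(G_D)$ holds with equality. Second, the bound~(\ref{eq:fraction}), and hence~(\ref{eq:omega}), becomes tight provided the $0/1$ vector $\w$ used there is a right singular vector of $\Lambda_2 A$ corresponding to $\sigma_1(\Lambda_2 A)$. When $h = 2$, direct inspection of~(\ref{Lambda2}) shows that every nonzero column of $\Lambda_2 A$ has entry $-1$ in exactly those positions $(i_1,i_2)$ with $1 \leq i_1 \leq m_1$ and $m_1 < i_2 \leq m_1 + m_2$, and $0$ elsewhere. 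Thus $\Lambda_2 A$ has rank one, all its nonzero columns are mutually equal, and the indicator $\w$ of the nonzero columns is parallel to the unique right singular direction of $\Lambda_2 A$; equality in~(\ref{eq:fraction}) follows.

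Finally, specializing~(\ref{eq:omega}) to $h = 2$ collapses both sums to a single term and gives $\omega = m_1 m_2 r_2 (r_1 - r_2)$. The symmetric calculation after transposing the Ferrers diagram uses $r'_1 = m_1 + m_2$, $r'_2 = m_1$, $m'_1 = r_2$, $m'_2 = r_1 - r_2$, and yields the same number for $\omega'$, so~(\ref{eq:omegaprime}) and~(\ref{eq:omegastar}) are equalities and~(\ref{eq:omegeq}) holds. Plugging these equalities back into the quadratic argument of the first paragraph turns~(\ref{upestopg}) into an identity. I do not expect a real obstacle; the only nontrivial point, which I regard as the key verification rather than a difficulty, is the rank-one structure of $\Lambda_2 A$ when $h = 2$, which is precisely what guarantees that the ad hoc test vector $\w$ chosen in the derivation of~(\ref{eq:omega}) attains the maximum in~(\ref{sigma1sigma2}).
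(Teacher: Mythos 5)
Your proposal is correct and follows essentially the same route as the paper: the upper bound comes from $\sigma_1^2+\sigma_2^2\leq e(G_D)$ together with $\sigma_1^2\sigma_2^2\geq\omega^*$, and the $h=2$ equality case rests on the rank-one structure of $\Lambda_2 A$ (the paper verifies tightness of~(\ref{eq:fraction}) by computing $\norm{(\Lambda_2 A)\w}^2/\norm{\w}^2=cd=\tr(\Lambda_2 A)\trans(\Lambda_2 A)$ explicitly, while you observe that $\w$ is parallel to the right singular vector of the rank-one matrix --- the same fact phrased differently). No gaps.
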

  \begin{Proof}
    Since the eigenvalues of $A\trans A$ are $\lm(G_D)^2=\sigma_1^2,
    \sigma_2^2,\ldots, \sigma_r^2, 0,0,\ldots$, we have $\sum_{i=1}^r
    \sigma_i^2 = \tr A\trans A = \sum_{i,j} A_{i,j}^2 = e$. Let us
    denote $a = \sigma_1^2 + \sigma_2^2$, so that $a \leq e=e(G_D)$, and $b = \sigma_1^2
    \sigma_2^2$, so that $b \geq \omega^*$ by~(\ref{eq:omegastar}). Solving for $\sigma_1^2$
    we obtain $\sigma_1^2 = \frac{a+\sqrt{a^2-4b}}{2} \leq \frac{e+\sqrt{e^2-4\omega^*}}{2}$.

    Assume that $h=2$.  Then $A$ has rank $2$, $\Lambda_2 A$ has
    rank $1$, and $a=e$.  Furthermore the definitions of
    $\omega,\omega'$ yield the equalities (\ref{eq:omegeq}).
    To complete the proof, we show that equality holds
    in~(\ref{eq:fraction}) and therefore also in~(\ref{eq:omega}),
    i.e., $b=\omega$.

    Since $\Lambda_2 A$ has rank $1$ and its elements are only $0$
    and $-1$, all its nonzero rows are equal. Say it has $c$ nonzero
    rows, each with $d$ elements of $-1$. The trace of $(\Lambda_2 A)\trans
    (\Lambda_2 A)$ is the sum of squares of the singular values of $
    \Lambda_2 A$, which equals $(\sigma_1(\Lambda_2 A))^2 =
    \sigma_1^2 \sigma_2^2$ in our case. This trace also equals the
    sum of squares of the elements of $\Lambda_2 A$, namely $cd$.

    On the other hand, our chosen vector $\w$ satisfies $\norm{\w}^2 =
    d$ and $\norm{(\Lambda_2 A)\w}^2 = cd^2$ (because each of the $c$ nonzero rows of $\Lambda_2 A$
    multiplied by $\w$ gives $-d$). Hence $\frac{\norm{(\Lambda_2 A)\w}^2}{\norm{\w}^2} =
    cd$. Thus both sides of~(\ref{eq:fraction}) are equal to $\sqrt{cd}$.
   \end{Proof}
  %
   We suspect that under the conditions of Theorem \ref{theo:estimate} for
   $h\geq 3$ one has strict inequality in (\ref{upestopg}).

  \section{A Minimization Problem}

  The first step in proving Conjecture~\ref{bipcone} is to show its
  validity in the case $h = 2$ in Figure~\ref{fig:Ferrers}.
  We note that for $h=2$, (\ref{upestopg}) is tight by Theorem
  \ref{theo:estimate}.  Theorem \ref{theo:estimate} also
  implies that equality holds in (\ref{eq:omega}), (\ref{eq:omegaprime}) and
  (\ref{eq:omegastar}).  This motivates us to consider the
  problem of minimizing $\omega^*(G)$.

  Let $n_1: = r_2$, $n_2:=r_1 - r_2$.  Then the condition that the
  chain graph $G$ has $e$ edges is equivalent to
  \begin{equation}\label{constre}
    m_1n_1 + m_1n_2 + m_2n_1 = e.
  \end{equation}

  Formula~(\ref{eq:omega}) for the case $h=2$ gives

  %
  \begin{equation}\label{omegarank2}
    \omega = m_1 m_2 n_1 n_2.
  \end{equation}
  Let $\cK_2(p,q,e) \subseteq \cK(p,q,e)$ be set of all subgraphs
  of $K_{p,q}$ isomorphic to some $G_D$ whose
  Ferrers diagram, given in
  Figure~\ref{fig:Ferrers}, satisfies the condition $h = 2$.
  By the above discussion for $h = 2$, the problem of finding
  $\max_{G \in \cK_2(p,q,e)} \lm(G)$ is equivalent to the following
  minimization problem over the integers.

  \begin{prob}\label{prob:minint}
    Let $p$, $q$ and $e$ be integers satisfying $2 \leq p \leq q$ and $3 \leq e < pq$.
    Find the minimum of $m_1 m_2 n_1 n_2$ in positive integers $m_1$, $m_2$, $n_1$ and $n_2$
    satisfying $m_1 + m_2 \leq p$, $n_1+n_2 \leq q$ and the constraint~(\ref{constre}).
  \end{prob}
  Note if $p=q$, then Problem~\ref{prob:minint} remains invariant under the duality of
  exchanging $(m_1,m_2)$ with $(n_1,n_2)$.  Conjecture~\ref{bipcone}
  implies that any minimal solution of Problem~\ref{prob:minint}
  satisfies the condition $\min(m_2,n_2) = 1$.

  In order to prove Conjecture~\ref{bipcone} in the cases discussed
  in Theorem~\ref{thm: prfmconj} we need to consider a problem of minimizing
  $m_1 m_2 n_1 n_2$  under certain constraints, where $m_1$, $m_2$, $n_1$ and $n_2$
  are real numbers.
  We start with the following simple lemma.
  \begin{lem}\label{auxmin}
    Let $b$, $b$ and $e$ be positive real numbers satisfying $e
    > a + b$. Assume that
    \begin{equation}\label{auxminc}
      ax + by = e, \quad 1 \leq x,\;1 \leq y.
    \end{equation}
    Then
    \begin{equation}\label{auxminin}
      xy \geq \min\left(\frac{e-a}{b},
      \frac{e-b}{a}\right). 
    \end{equation}
    Equality holds if and only if
    \begin{enumerate}
      \item $x=1$ when $a<b$;
      \item $y=1$ when $b<a$;
      \item $x=1$ or $y=1$ when $a=b$.
    \end{enumerate}
  \end{lem}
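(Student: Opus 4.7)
The plan is to treat the affine constraint $ax+by=e$ as defining $y$ in terms of $x$ and then analyse $xy$ as a single-variable function. Substituting $y=(e-ax)/b$ yields
\[
 f(x) := xy = \frac{x(e-ax)}{b} = \frac{ex - ax^2}{b},
\]
so that $f''(x) = -2a/b < 0$ and $f$ is strictly concave. The feasibility conditions $x\geq 1$ and $y\geq 1$ translate respectively to $x\geq 1$ and $x\leq (e-b)/a$, and the hypothesis $e>a+b$ guarantees $(e-b)/a>1$, so this is a nondegenerate closed interval.

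By strict concavity, $f$ attains its minimum on $[1,(e-b)/a]$ only at an endpoint, and is strictly larger than both endpoint values at every interior point. Since $f(1)=(e-a)/b$ and $f((e-b)/a)=(e-b)/a$, this immediately gives the inequality \eqref{auxminin}, and shows that equality holds precisely when $(x,y)$ equals either $(1,(e-a)/b)$ or $((e-b)/a,1)$.

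It then only remains to identify which endpoint realises the smaller value. I would do this by cross-multiplication: $(e-a)/b \leq (e-b)/a$ is equivalent to $a(e-a)\leq b(e-b)$, i.e.\ $(a-b)(e-a-b)\leq 0$, and the second factor is strictly positive by hypothesis, so the inequality reduces to $a\leq b$. Hence if $a<b$ the minimum is $(e-a)/b$ and is attained only at the endpoint $x=1$; if $a>b$ the minimum is $(e-b)/a$ and is attained only at $y=1$; if $a=b$ the two endpoint values coincide, so equality is attained iff $x=1$ or $y=1$. This matches the three cases in the statement.

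The whole argument rests on one application of strict concavity together with a sign comparison, so there is no real obstacle. The only subtle point is using \emph{strict} concavity (rather than mere concavity) to upgrade the bound to an "if and only if", which is what the equality clause of the lemma requires.
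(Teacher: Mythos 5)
Your proof is correct and follows essentially the same route as the paper's: substitute $y=(e-ax)/b$, observe that $xy$ is a concave parabola in $x$ on the feasible interval $[1,(e-b)/a]$ (nondegenerate since $e>a+b$), so its minimum occurs only at an endpoint, and then identify the smaller endpoint value. One minor wording slip: a strictly concave function need not exceed \emph{both} endpoint values at interior points, only their minimum, but that weaker (and correct) statement is all your argument actually uses; your explicit comparison of the endpoints via $(a-b)(e-a-b)\leq 0$ is a detail the paper's proof leaves implicit.
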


  \begin{proof}
    Set $by = e - ax$ and observe that $f(x) := byx = (e - ax)x$.
    Note that $f$ is a parabola, with its maximum at $x_0 := \frac{e}{2a}$.
    So $f$ is decreasing for $x > x_0$ and increasing for $x < x_0$.
    The minimum of $xy = \frac{f}{b}$, given the constraints $x \geq 1$, $y \geq 1$
    is achieved only when $x = 1$ and $y = \frac{e-a}{b} > 1$ (the minimum
    possible value of $x$), or when $y = 1$ and $x = \frac{e-b}{a} > 1$ (where $x$ is the maximum
    possible value).  For $a < b$ and  $x = 1$ we have
    $xy = \frac{e-a}{b}$, which is the LHS of
    (\ref{auxminin}).  For $b < a$  and
    $y = 1$ we have $xy = \frac{e-b}{a}$, which is the RHS of
    (\ref{auxminin}).
  \end{proof}
  \begin{prop}\label{prop:upbndm}
     Let $2 \leq r, e \in \N$ and assume that
       \begin{equation}\label{asmrcon1}
          e = lr + r-1,  r \leq p, q \leq l + 1 + \frac{l}{r-1}.
       \end{equation}
     Let $G = (V,W,E) \in \cK(p,q,e)$.
     Then $\#V \geq r$ and $\#W \geq r$.
  \end{prop}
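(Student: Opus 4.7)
The plan is to argue by contradiction. Suppose $\#V \le r-1$. Since $G$ has no isolated vertices, every vertex of $V$ contributes at least one edge to $W$, giving the trivial bound $e \le \#V \cdot \#W$. Chaining this with $\#V \le r-1$ and $\#W \le q \le l+1+\tfrac{l}{r-1}$ produces
\[
e \;\le\; \#V \cdot \#W \;\le\; (r-1)\,q \;\le\; (r-1)\left(l+1+\tfrac{l}{r-1}\right).
\]
The right-hand side simplifies, via the arithmetic identity
\[
(r-1)\left(l+1+\tfrac{l}{r-1}\right) = (r-1)(l+1) + l = lr + r - 1 = e,
\]
back to $e$ itself. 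Hence equality must hold throughout the chain.

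Equality at the first step gives $\#V \cdot \#W = e$, and since $G$ has no isolated vertices on the parts $V$ and $W$, this means every possible cross edge is present, so $G$ is the complete bipartite graph $K_{\#V,\#W}$. This contradicts the requirement that members of $\cK(p,q,e)$ are \emph{not} complete bipartite, so $\#V \ge r$. The conclusion $\#W \ge r$ follows by the symmetric argument, exchanging the roles of $V$ and $W$ and using the companion bound $p \le l+1+\tfrac{l}{r-1}$ that the hypothesis provides.

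There is no serious obstacle in the argument. The three ingredients are (i) the elementary bound $e \le \#V \cdot \#W$ for bipartite graphs with no isolated vertices, (ii) the numerical identity $(r-1)\bigl(l+1+\tfrac{l}{r-1}\bigr)=e$ that makes the upper bound on $q$ exactly tight, and (iii) the saturation principle that $\#V\cdot\#W$ edges among $\#V+\#W$ vertices with no isolated vertices forces the complete bipartite graph. The only minor care is to verify that each inequality in the chain collapses to equality simultaneously, so that the conclusion is the exact identification $G = K_{\#V,\#W}$ rather than merely a size bound; this is immediate from tracing the saturation back from the end of the chain to its start.
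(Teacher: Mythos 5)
Your proof is correct and takes essentially the same route as the paper's: chain $e \le \#V\cdot\#W \le (r-1)q \le (r-1)\bigl(l+1+\tfrac{l}{r-1}\bigr)=lr+r-1=e$ and contradict the fact that $G$ is not complete bipartite, with the $\#W$ case handled symmetrically. The only cosmetic difference is that the paper invokes the strict inequality $e<\#V\cdot\#W$ up front (since $G$ is not complete bipartite), whereas you run the weak inequalities and then rule out the equality case at the end.
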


  \begin{proof}
     Assume to the contrary that $\#V
     \leq r-1$. Since $G$ is not a complete bipartite graph,
       \[e < (r-1)q  \leq (r-1)\left(l+1+\frac{l}{r-1}\right) = lr+r-1 = e,\]
     which is impossible.  Replacing $q$ by $p$ we deduce that $\#W \geq r$.
  \end{proof}
  Hence if $p$, $q$ and $e$ satisfy (\ref{asmrcon1}), then for $D \in \cK_2(p,q,e)$ we must have
  $m_1 + m_2 \geq r$ and $n_1 + n_2\geq r$. Since $m_1$, $m_2$, $n_1$, $n_2$ are positive integers, they
  satisfy the following constraints

  \begin{equation}\label{constrr}
     m_ \geq 1, \; m_2 \geq 1, \;  n_1 \geq 1, \; n_2 \geq 1, \; m_1 + m_2 \geq r, \; n_1 + n_2 \geq r.
 \end{equation}

 \begin{theo}\label{e=rk+r-1}
   Let $2 \leq r \in \N$, $e \in [r^2+1, \infty)$ and consider
   the minimum of $\omega = m_1 m_2 n_1 n_2$ subject to
   $m_1$, $m_2$, $n_1$, $n_2$ $ \in \R$,
   (\ref{constre}) and (\ref{constrr}). Then the minimum
   is $\frac{(r-1)(e-r+1)}{r}$,
   and it is achieved only in one of the two cases
   \begin{eqnarray}\label{minsol3k+r-1}
     (m_1, m_2) = (r-1,1), \; (n_1, n_2) = \left(\frac{e-r+1}{r},1\right)\\
     (m_1, m_2) = \left(\frac{e-r+1}{r},1\right), \; (n_1,n_2) = (r-1,1).
   \end{eqnarray}
 \end{theo}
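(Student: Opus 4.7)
The strategy is to apply Lemma~\ref{auxmin} to reduce the four-variable minimization to a two-variable one, then analyze the resulting function by elementary calculus, using the symmetry $(m_1, m_2) \leftrightarrow (n_1, n_2)$ of both the objective and the constraint set to handle the residual region.

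Rewriting (\ref{constre}) as $(m_1 + m_2) n_1 + m_1 n_2 = e$ and noting that $m_1 + m_2 > m_1$ since $m_2 \geq 1$, Lemma~\ref{auxmin} applied with $a = m_1 + m_2$, $b = m_1$, $x = n_1$, $y = n_2$ yields $n_1 n_2 \geq (e - m_1)/(m_1 + m_2)$, with equality iff $n_2 = 1$ and $n_1 = (e - m_1)/(m_1 + m_2)$. Therefore
\[\omega \;\geq\; f(m_1, m_2) \;:=\; \frac{m_1 m_2 (e - m_1)}{m_1 + m_2}.\]
A direct differentiation gives $\partial f/\partial m_2 = m_1^2 (e - m_1)/(m_1 + m_2)^2 > 0$, so $f$ is strictly increasing in $m_2$. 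Hence its minimum is attained at $m_2 = \max\{1, r - m_1\}$, splitting into two cases.

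If $m_1 \geq r - 1$ so that $m_2 = 1$, then $f$ becomes $g(m_1) := m_1(e - m_1)/(m_1 + 1)$, restricted to $m_1 \in [r - 1, (e - r + 1)/r]$ (the upper bound ensuring $n_1 \geq r - 1$, hence $n_1 + n_2 \geq r$; the interval is non-degenerate since $e \geq r^2 + 1$). Solving $g'(m_1) = 0$ shows the unique critical point $\sqrt{e + 1} - 1$ lies in the interior of this interval, so $g$ is unimodal with interior maximum, hence minimized at the endpoints; direct calculation yields $g(r - 1) = g((e - r + 1)/r) = (r - 1)(e - r + 1)/r$. If instead $m_1 \in [1, r - 1)$ so that $m_2 = r - m_1$, then $f = h(m_1) := m_1(r - m_1)(e - m_1)/r$, a cubic with roots $0, r, e$ and a single interior maximum on $(0, r)$, hence minimized at the endpoints of $[1, r - 1]$; since $h(r - 1) = (r - 1)(e - r + 1)/r < h(1) = (r - 1)(e - 1)/r$ for $r > 2$, this case merges with the previous one at the overlap point $(r - 1, 1)$.

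The two endpoints of the first case translate into precisely the minimizers (\ref{minsol3k+r-1}): $(m_1, m_2, n_1, n_2) = (r - 1, 1, (e - r + 1)/r, 1)$ and $((e - r + 1)/r, 1, r - 1, 1)$. The main obstacle will be ruling out smaller values of $\omega$ in the \emph{Lemma-infeasible region} $\{(m_1, m_2) : (e - m_1)/(m_1 + m_2) < r - 1\}$, where the constraint $n_1 + n_2 \geq r$ forces $n_2 > 1$ and breaks Lemma~\ref{auxmin}'s equality. This is handled by invoking the symmetry $(m_1, m_2) \leftrightarrow (n_1, n_2)$: a parallel application of Lemma~\ref{auxmin} with $a = n_1 + n_2$ and $b = n_1$ yields $\omega \geq \tilde f(n_1, n_2) := n_1 n_2 (e - n_1)/(n_1 + n_2)$, and the identical two-case analysis applied to $\tilde f$ gives $\min \tilde f = (r - 1)(e - r + 1)/r$, attained at the second minimizer in (\ref{minsol3k+r-1}); hence no smaller value of $\omega$ can occur.
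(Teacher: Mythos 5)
Your reduction via Lemma~\ref{auxmin} and the one-variable analysis of $g(m_1)=m_1(e-m_1)/(m_1+1)$ and $h(m_1)=m_1(r-m_1)(e-m_1)/r$ is essentially the paper's own route, and that part is sound. The gap is in the final symmetry step. Your first analysis establishes $\omega\geq\frac{(r-1)(e-r+1)}{r}$ only for feasible points with $m_1\leq\frac{e-r+1}{r}$: the chain $\omega\geq f(m_1,m_2)\geq f(m_1,\max(1,r-m_1))$ is valid everywhere, but $g$ is decreasing beyond its critical point $\sqrt{e+1}-1<\frac{e-r+1}{r}$, so for $m_1>\frac{e-r+1}{r}$ the lower bound $g(m_1)$ falls below the target and proves nothing. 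You propose to cover this residual region by the symmetric bound $\omega\geq\tilde f(n_1,n_2)$, but the ``identical two-case analysis'' applied to $\tilde f$ suffers from exactly the same limitation: it yields the target bound only for feasible points with $n_1\leq\frac{e-r+1}{r}$ (indeed $n_1(e-n_1)/(n_1+1)$ also decreases past $\sqrt{e+1}-1$, so $\min\tilde f$ over the unrestricted domain is \emph{not} $\frac{(r-1)(e-r+1)}{r}$). Hence ``no smaller value of $\omega$ can occur'' does not follow as written: you still owe the coverage claim that no feasible point has both $m_1>\frac{e-r+1}{r}$ and $n_1>\frac{e-r+1}{r}$.

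That missing fact is true, and it is precisely where the hypothesis $e\geq r^2+1$ must enter your argument: if $m_1,n_1>u:=\frac{e-r+1}{r}$ and $m_2,n_2\geq1$, then $e=m_1n_1+m_1n_2+m_2n_1>u^2+2u=\frac{(e+1)^2}{r^2}-1$, which forces $e<r^2-1$, a contradiction. Inserting this observation closes your proof. For comparison, the paper sidesteps the issue differently: it first eliminates $m_1+m_2\geq e/r$ via the crude bound $\omega\geq(m_1+m_2-1)^2$, assumes without loss of generality $m_1+m_2\leq n_1+n_2$, and in the $m_2=1$ case uses $m_1\leq\sqrt{e+1}-1\leq n_1$ to place $m_1$ on the increasing branch of $g$. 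One further small point: Lemma~\ref{auxmin} requires $e>a+b=2m_1+m_2$, which fails exactly when $n_1=n_2=1$; for $r=2$ that configuration is feasible and should be treated directly (one finds $\omega=m_1(e-2m_1)\geq\frac{e-1}{2}$, with equality only at the second minimizer in~(\ref{minsol3k+r-1})), while for $r\geq3$ it is excluded by $n_1+n_2\geq r$.
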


 \begin{proof}
   Let $\overline \omega$ be the minimum value of $\omega$
   subject (\ref{constre}) and (\ref{constrr}). Note that since
   for the values of $m_1$, $m_2$, $n_1$, $n_2$ given in
   (\ref{minsol3k+r-1}) we have $\omega =
   \frac{(r-1)(e-r+1)}{r}$, we deduce that $\overline \omega
   \leq \frac{(r-1)(e-r+1)}{r}$. Since all the functions are
   symmetric in $(m_1, m_2)$ and $(n_1, n_2)$, we will always
   assume that $m_1 + m_2 \leq n_1+n_2$. As $m_1 \geq 1$ and
   $m_2 \geq 1$, we have $(m_1-1)(m_2-1) \geq 0$, which implies
   $m_1 m_2 \geq m_1 + m_2 - 1$. Similarly $n_1 n_2 \geq n_1 +
   n_2-1$. Thus
   \[\omega = m_1 m_2 n_1 n_2 \geq (m_1 + m _2 - 1)(n_1 + n_2 - 1)\geq
   (m_1 + m_2 - 1)^2.\]
   Suppose that $m_1 + m_2 \geq \frac{e}{r}$.
   Then
   \[ \omega \geq (m_1 + m_2 - 1)^2 \geq
   \left( \frac{e}{r} - 1
   \right) ^2 = \frac{(e-r)^2}{r^2}.\]
   Since $e \geq r^2+1$
   and $r \geq 2$, we obtain
   \[
     (e - r)^2 - r(r -1 )(e - r +1) =
     (e - r^2 - 1)(r^2 - r + 2) + 1 \geq 0 \cdot 0 + 1.
   \]
   Hence $\omega > \frac{(r - 1)(e - r + 1)}{r}$ if $m_1 + m_2 \geq \frac{e}{r}$.
   Thus to find the value of $\overline\omega$, we may assume that
   $m_1 + m_2 < \frac{e}{r}$.

   Fix $m_1$, $m_2$ that satisfy the conditions
   \begin{equation}\label{newconm1m1}
     m_1 \geq 1, \; m_2 \geq 1, \; \frac{e}{r} > m_1 + m_2\geq r.
   \end{equation}
   Now let us find the minimum of $n_1n_2$ subject to
   $n_1 \geq 1, \; n_2 \geq 1$ and (\ref{constre}).
   The constraint (\ref{constre}) is equivalent
   to (\ref{auxminc}) with $a = m_1 + m_2$, $b = m_1$, $x = n_1$ and $y = n_2$.
   Also $a + b = 2m_1 + m_2 < r(m_1 + m_2) \leq e$.
   Since $a > b$, Lemma~\ref{auxmin} implies that
   $n_1n_2$ is at a minimum when
   \begin{equation}\label{optvn1n2}
     n_2 = 1, \; n_1 := n_1(e,m_1,m_2) = \frac{e-b}{a} = \frac{e-m_1}{m_1+m_2}.
   \end{equation}
   Clearly
   \[n_1(e,m_1,m_2) > \frac{e - (m_1+m_2)}{m_1+m_2} > \frac{e-\frac{e}{r}}{\frac{e}{r}} = r-1 \geq
   1,\]
   and hence $n_1(e,m_1,m_2) + 1 \geq r$.
   Thus the problem of minimizing $m_1 m_2 n_1 n_2$ subject to
   (\ref{constre}) and (\ref{constrr}) is equivalent to the problem
   \begin{equation}\label{newminpr}
     \text{minimize }
     \tau := m_1 m_2 n_1 (e,m_1,m_2) = \frac{(e-m_1)m_1m_2}{m_1+m_2} \;
     \text{ subject to (\ref{newconm1m1})}.
   \end{equation}

   Fix $m_1$.  Since the function $\frac{t}{m_1 + t}$ increases
   for $t > 0$, the minimum of $\tau$ in (\ref{newminpr}) is
   achieved only
   in the following two cases:
   \begin{enumerate}
     \item\label{case1} Case~1: $m_2 = r - m_1$ if $1 \leq
         m_1 \leq r-1$;
     \item\label{case2} Case~2: $m_2 = 1$ if $m_1 \geq
         r-1$.
   \end{enumerate}

   Consider first Case~\ref{case1}.
   Assume first that $r = 2$.  Then $m_1 = m_2 = 1$,
   and the minimum of $\tau$ subject to Case~\ref{case1}
   is $\frac{(e-1)}{2}$.

    Now assume that $r \geq 3$. Then $\tau = f(m_1)$, where
    $f(t) := \frac{t(r-t)(e-t)}{r}$.  Note that $f(t)$ is a
    cubic with zeros at $0$, $r$ and $e$.  Also $f(t) < 0$ for
    $t < 0$ and $f(t) > 0$ for $t > e$.  Hence $f'(t) = 0$ for
    some $t = $ $t_1 \in (0,r)$ and some $t = t_2 \in (r,e)$.
    Note that $f(t)$ increases on $(-\infty, t_1)$ and
    decreases on $(t_1,t_2)$.

    We need to find $\min_{t \in [1,r-1]} f(t)$.
    Clearly
    \[rf'(t) = (r-t)(e-t) - t(e-t) - t(r-t) = (r-2t)(e-2t) - t^2.\]
    As $(r-2)(e-2) \geq e-2 \geq r^2-1 \geq 8$, we deduce that $f'(1) > 0$.
    As $r-2(r-1) = 2 - r < 0$ and
    $(e-2r+2) \geq (r^2-2r+3) = (r-1)^2+2 > 0$,
    it follows that $f'(r-1) < 0$.  So $1 < t_1 < r-1$.  Hence
    \[\min_{t \in [1,r-1]} f(t) =
    \min(f(1),f(r-1)) = f(r-1) = \frac{(e-r+1)(r-1)}{r},\]
    which is achieved only for $m_1 = r-1$ and $m_2 = 1$.

    We now consider Case~\ref{case2}.  In this case $\tau = g(m_1)$, where
    \[g(t) = \frac{(e-t)t}{t+1} = e + 1 - t -\frac{e+1}{t+1}.\]
    Thus on $[0,\infty)$, the function $g'(t)$ vanishes at the
    point
    \[t_0 = \sqrt{e+1} - 1 > \sqrt{r^2} - 1 = r-1.\]
    Note that $t_0$ is the unique
    solution of (\ref{constre}), where $m_1 = n_1 = t$
    and $m_2 = n_2 = 1$.
    So $f(t)$ increases on $[0,t_0]$ and decreases on the interval
    $[t_0,\infty)$.  Our assumption that $m_1 + m_2 = m_1+1 \leq
    n_1+n_2 = n_1+1$ is
    equivalent to $m_1 \leq n_1$.  So $m_1\leq t_0 \leq n_1$.  The
    assumption that $m_1+m_2 = m_1+1 \geq r$ means that $m_1 \geq r-1$.
    Hence $g(m_1)$ has a unique minimum at $m_1=r-1$.  This
    corresponds to $n_1 = \frac{e-r+1}{r}$.
    Interchanging $(m_1,m_2)$ with $(n_1,n_2)$ obtain the second
    solution $(m_1,m_2) = (\frac{e-r+1}{r},1),\;(n_1,n_2)=(r,1)$.
  \end{proof}

  \begin{theo}\label{cor:thme=rk+r-1}
    Suppose one of the following conditions holds.
    \begin{enumerate}
      \item $r = 2$, $e \geq 3$ is odd and $2 \leq p \leq
          q$, $l = \frac{e-1}{2} < q$.
      \item $3 \leq r \in \N$ and  (\ref{asmrcon1}) holds.
    \end{enumerate}
    Then $\min_{G \in \cK_2(p,q,e)} \lm (G)$
    is achieved only for $G_D$ isomorphic to the graph obtained from
    $K_{r-1,l+1}$ by adding one vertex to the group of $r-1$ vertices
    and connecting it to $l$ vertices in the group of $l+1$ vertices.
  \end{theo}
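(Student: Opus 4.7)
The plan is to use Theorem~\ref{theo:estimate} to convert the spectral maximization into the integer minimization of $\omega=m_1m_2n_1n_2$ that was already formulated as Problem~\ref{prob:minint}, and then to read off the result from the real-valued minimization of Theorem~\ref{e=rk+r-1}. For any $G\in\cK_2(p,q,e)$ the Ferrers diagram of its representation matrix has $h=2$, so Theorem~\ref{theo:estimate} asserts equality in~(\ref{upestopg}) and gives the exact identity
$$\lm(G)^2=\frac{e+\sqrt{e^2-4\omega(G)}}{2},\qquad \omega(G)=m_1m_2n_1n_2.$$
The right-hand side is strictly decreasing in $\omega$, so maximizing $\lm(G)$ over $\cK_2(p,q,e)$ is equivalent to minimizing $\omega$ over integer quadruples $(m_1,m_2,n_1,n_2)$ with $m_i,n_j\ge1$ that satisfy (\ref{constre}) together with $m_1+m_2\le p$ and $n_1+n_2\le q$. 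Proposition~\ref{prop:upbndm} supplies the matching lower bounds $m_1+m_2\ge r$ and $n_1+n_2\ge r$, so every admissible integer tuple also satisfies~(\ref{constrr}) and is therefore admissible for the real relaxation treated in Theorem~\ref{e=rk+r-1}.

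Under hypothesis~(ii) a brief arithmetic check shows $e=lr+r-1\ge r^2+1$ (this uses $l\ge r$, which is built into the setting where this corollary is applied), so Theorem~\ref{e=rk+r-1} applies and yields real minimum $(r-1)(e-r+1)/r=l(r-1)$, attained only at the tuples $(r-1,1,l,1)$ and $(l,1,r-1,1)$. Both tuples consist of positive integers, so the real lower bound is actually realized in the integer problem. The first tuple needs $m_1+m_2=r\le p$ and $n_1+n_2=l+1\le q$, both of which hold by~(\ref{asmrcon1}); the second is feasible only when $p\ge l+1$, in which case (\ref{asmrcon1}) forces $p=l+1$ and the two tuples describe isomorphic chain graphs related by interchanging the two sides of the bipartition. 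The extremal tuple $(r-1,1,l,1)$ translates via the Ferrers correspondence into a chain graph whose first side has $r-1$ vertices of degree $l+1$ and one vertex of degree $l$, i.e.\ the graph obtained from $K_{r-1,l+1}$ by adjoining a further vertex to the first side and joining it to $l$ of the $l+1$ vertices of the second side, exactly as claimed.

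For hypothesis~(i), namely $r=2$ and $e\ge3$ odd with $l=(e-1)/2$, the sub-case $e\ge5$ falls inside the range $e\ge r^2+1=5$ of Theorem~\ref{e=rk+r-1} and the previous argument goes through with unique extremum $(1,1,l,1)$, which is $K_{1,l+1}$ with a pendant vertex of degree $l$ on the other side. The boundary case $e=3$ is not covered by Theorem~\ref{e=rk+r-1}, but a finite enumeration of positive integer solutions of $m_1n_1+m_1n_2+m_2n_1=3$ forces $(m_1,m_2,n_1,n_2)=(1,1,1,1)$, which already describes the graph in the statement.

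The spectral-to-combinatorial reduction is immediate once $h=2$ is used in Theorem~\ref{theo:estimate}, and the heavy lifting has been done in Theorem~\ref{e=rk+r-1}. The main obstacle, such as it is, is the bookkeeping step of verifying that the real optimum of the relaxed problem (a) lands on an integer tuple, and (b) remains feasible for the original combinatorial problem, i.e.\ fits inside the rectangle $[1,p]\times[1,q]$ permitted by $\cK_2(p,q,e)$. Both checks are exactly what the arithmetic hypotheses in~(\ref{asmrcon1}) are engineered to guarantee, so after the spectral reduction the proof is essentially mechanical.
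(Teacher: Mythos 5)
Your proof is correct and follows essentially the same route as the paper's, whose own proof is little more than a citation of Theorem~\ref{theo:estimate} (exactness of the bound when $h=2$), Proposition~\ref{prop:upbndm} (the constraints $m_1+m_2\geq r$, $n_1+n_2\geq r$), and Theorem~\ref{e=rk+r-1}, with the case $e=3$ dismissed as trivial; you have simply filled in the routine details (monotonicity in $\omega$, the check $e\geq r^2+1$ via $l\geq r$, and the integrality, feasibility, and isomorphism of the two extremal tuples). The only remark worth adding is that the $\min$ in the theorem statement is evidently a typo for $\max$, which you correctly interpreted.
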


  \begin{proof}  Assume that $G_D$ has the Ferres diagram given
    in Figure~\ref{fig:Ferrers} with $h=2$.  Let
    $n_1 = r_2$ and $n_2 = r_1 - r_2$.
    Assume first that $r=2$ and $3 \leq e$ is odd.  Then
    $m_1 \geq 1$, $m_2 \geq 1$, $n_1 \geq 1$ and $n_2 \geq
    1$, so $m_1 + m_2 \geq 2$ and $n_1 + n_2 \geq 2$.  Then for $e \geq 5$ the
    theorem follows from Theorem~\ref{e=rk+r-1} for $r=2$ and
    Theorem \ref{theo:estimate}.  For $e=3$ the theorem is trivial.
    For $r \geq 3$  the theorem follows from Proposition~\ref{prop:upbndm},
    Theorem~\ref{e=rk+r-1} and  Theorem~\ref{theo:estimate}.
  \end{proof}

  \section{A special case of Problem~\ref{prob:minint}}

  We now discuss a
  special case of Problem~\ref{prob:minint}
  that is not covered by Theorem~\ref{e=rk+r-1}.

  \begin{theo}\label{e=3k+1}
    Let $e = 3k+1$, where $k \geq 7$ is an integer. Consider
    the minimum of $\omega = m_1 m_2 n_1 n_2$, where $m_1$,
    $m_2$, $n_1$ and $n_2$ are positive integers satisfying the
    constraints $m_1(n_1+n_2) + m_2n_1 = e$, $m_1+m_2 \geq 3$,
    $n_1+n_2 \geq 3$; in other words, (\ref{constre}) and
    (\ref{constrr}) with $r=3$ hold. Then the minimum of
    $\omega$ is $2k$, and it is achieved if and only one of the
    following cases holds:
    \begin{equation}
    \label{minsol3l+1}
      (m_1,m_2) = (1,2), \; (n_1,n_2) = (k,1)
    \end{equation}
    \begin{equation}
      \label{minsol3l+1prime}
      (m_1,m_2) = (k,1), \; (n_1,n_2) = (1,2).
    \end{equation}
  \end{theo}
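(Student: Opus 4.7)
The plan is to adapt the strategy of Theorem~\ref{e=rk+r-1} to the integer setting $e = 3k+1$, where the real optimum $\tfrac{2(3k-1)}{3}$ is non-integral and must be replaced by the next achievable value $2k$. I would first check by direct substitution that both (\ref{minsol3l+1}) and (\ref{minsol3l+1prime}) satisfy (\ref{constre}) and (\ref{constrr}) and yield $\omega = 2k$, so the minimum is at most $2k$. By the invariance of the constraint and of the objective under the swap $(m_1,m_2) \leftrightarrow (n_1,n_2)$, I may assume $s := m_1+m_2 \leq n_1+n_2 =: t$. The inequalities $m_1 m_2 \geq s - 1$ and $n_1 n_2 \geq t - 1$ (from $(m_1-1)(m_2-1)\geq 0$ and the analog for $n$) give $\omega \geq (s-1)(t-1)$, and rewriting (\ref{constre}) as $st = 3k+1+m_2 n_2$ yields $st \geq 3k+2$, so $t \geq \max(s,\, (3k+2)/s)$.

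The bulk of the work is to rule out $s \geq 4$, and I split on whether $s \leq \sqrt{3k+2}$. In that first regime, $t \geq (3k+2)/s$ gives $\omega \geq (s-1)(3k+2-s)/s$, and the desired inequality $\omega > 2k$ is equivalent to $k(s-3) > (s-1)(s-2)$. This is direct for $s \in \{4,5,6\}$ (needing $k>6$, $k>6$, $k > 20/3$), all satisfied by $k \geq 7$; for $s \geq 7$ I combine the admissibility bound $k \geq (s^2-2)/3$ with the elementary inequality $(s^2-2)/3 > (s-1)(s-2)/(s-3)$, which for $s \geq 5$ reduces to $s(s^2-6s+7) > 0$. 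In the complementary regime $s > \sqrt{3k+2}$, integrality of $s$ gives $(s-1)^2 \geq (\sqrt{3k+2}-1)^2 = 3k+3-2\sqrt{3k+2}$, and this exceeds $2k$ precisely when $k^2 - 6k + 1 > 0$, i.e.\ for $k \geq 6$.

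The remaining case $s = 3$ splits as $(m_1,m_2) \in \{(1,2),(2,1)\}$. For $(1,2)$, (\ref{constre}) reduces to $3n_1+n_2 = 3k+1$ with $n_1 \in \{1,\ldots,k\}$; since $n_1 n_2 = n_1(3k+1-3n_1)$ is a concave function of $n_1$, its minimum on the integer interval is attained at an endpoint, giving $n_1 n_2 \in \{3k-2,\,k\}$, so $\omega = 2 n_1 n_2 \geq 2k$ with equality only at $(n_1,n_2) = (k,1)$, matching configuration~(\ref{minsol3l+1}). For $(2,1)$, the equation $3n_1 + 2n_2 = 3k+1$ forces $n_2 \equiv 2 \pmod 3$, so $n_2 \geq 2$; the same concavity argument locates the minimum at an endpoint and yields $\omega \in \{4(k-1),\,3k-2\}$, both strictly greater than $2k$ for $k \geq 3$. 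Combining this with the $s \geq 4$ analysis and applying the symmetry swap identifies (\ref{minsol3l+1}) and (\ref{minsol3l+1prime}) as the only minimizers.

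The main obstacle is the tight numerical bookkeeping at the boundary of the hypothesis. The bound $(s-1)(3k+2-s)/s > 2k$ is only barely true for $(s,k) = (4,7)$, reading $14.25 > 14$; and in the complementary regime $k^2 - 6k + 1 > 0$ requires $k \geq 6$. Together, these show that the hypothesis $k \geq 7$ is essentially optimal and cannot be loosened by cruder estimates. A secondary subtlety is that the Diophantine obstruction $n_2 \equiv 2 \pmod 3$ in the $(m_1,m_2) = (2,1)$ subcase is exactly what prevents a spurious tie at $\omega = 2k$, since the real optimum $\tfrac{2(3k-1)}{3}$ would otherwise come arbitrarily close.
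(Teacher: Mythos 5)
Your proof is correct, and in the main case it takes a genuinely different route from the paper's. The paper splits on $m_1+m_2 \geq k$ versus $3 < m_1+m_2 < k$ (the latter subdivided by whether $X = m_1m_2 \geq k-1$), and in the hardest subcase argues by contradiction: assuming $n_1n_2 \leq \floor{\frac{2k}{X}}$ it bounds $e$ from above using the piecewise-linear function $g(x) = x + \floor{\frac{2k}{x}}$ and its comparison with the convex function $x + \frac{2k}{x}$, concluding $e \leq 3k < 3k+1$. You instead exploit the identity $(m_1+m_2)(n_1+n_2) = e + m_2n_2 \geq 3k+2$ together with $\omega \geq (s-1)(t-1)$, which collapses the entire case $s \geq 4$ into the polynomial inequality $k(s-3) > (s-1)(s-2)$ plus the complementary bound $(s-1)^2 > 2k$; this avoids the floor-function bookkeeping entirely, is shorter, and makes transparent exactly where $k \geq 7$ is needed (your $(s,k)=(4,7)$ computation $14.25>14$). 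Both proofs handle $m_1+m_2 = 3$ essentially identically, by enumerating $(m_1,m_2)\in\{(1,2),(2,1)\}$ and an endpoint/concavity argument, and your congruence $n_2 \equiv 2 \pmod 3$ in the $(2,1)$ subcase is the same observation the paper uses to force $n_2 \geq 2$. Two cosmetic points: for $k=7$ the values $s=5,6$ actually fall in your second regime $s > \sqrt{3k+2}$, but since the bound $\omega \geq \frac{(s-1)(3k+2-s)}{s}$ is valid in both regimes this is harmless; and in the $(2,1)$ subcase the endpoint value $3k-2$ requires $k$ even (for odd $k$ the relevant endpoint is $n_1=2$, giving $\omega = 6k-10$), but the real-relaxation concavity bound you invoke already yields $\omega \geq 3k-2 > 2k$ in all cases.
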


  \begin{proof}  Clearly, if
    (\ref{minsol3l+1}) or (\ref{minsol3l+1prime}) holds, then
    $\omega = 2k$.  Thus it is enough to show that for all
    integer values of $m_1$, $m_2$, $n_1$ and $n_2$ satisfying
    the constraints that are different from the values given
    in~(\ref{minsol3l+1}) and (\ref{minsol3l+1prime}), we have
    $\omega > 2k$.

    Since we can interchange $m_1$ with $n_1$ and $m_2$ with
    $n_2$, we assume without loss of generality that $n_1 + n_2
    \geq m_1 + m_2$. We denote the product $m_1m_2$ by $X$.
    Since $m_1 + m_2 \geq 3$, it follows that $X \geq 2$.

    {\bf Case 1:} $m_1 + m_2 \geq k$. Since $n_1 + n_2 \geq m_1
    + m_2 \geq k$, we have $m_1m_2 \geq k-1$ and $n_1n_2 \geq
    k-1$. This implies  $\omega = m_1m_2n_1n_2 \geq (k-1)^2 >
    2k$ since $k\geq 7$.

    {\bf Case 2:} $3 < m_1 + m_2 < k$.  Hence $X \geq 3$.
    Suppose $X \geq k-1$. Since $n_1+n_2 \geq 4$, we have
    $n_1n_2 \geq 3$. Thus $\omega = m_1m_2n_1n_2 \geq 3(k-1) >
    2k$ since $k\geq 7$.

    So for the remaining part of Case~2 we assume that $X <
    k-1$, and hence  $\floor{\frac {2k}{X}} \geq 2$. We will
    now show that $n_1n_2 \geq \floor{\frac {2k}{X}} + 1$, from
    which it will follow  that $\omega = X n_1 n_2
    >X {\frac{2k}{X}} = 2k$, as required.

    Assume to the contrary that $n_1 n_2 < \floor{\frac {2k}
    {X}} + 1$. Since $n_1 n_2\leq \floor{\frac {2k}{X}}$ we
    have $n_1 + n_2 \leq \floor{\frac{2k}{X}} + 1$, and hence
    $n_1 \leq \floor{\frac {2k}{X}}$. We now obtain an upper
    bound for $e$.

    Clearly
    \[e = m_1(n_1+n_2) + m_2n_1
    \leq  m_1 \left( \floor{{\frac{2k}{X}}} + 1 \right) + m_2n_1
    \leq  m_1 \left( \floor{{\frac{2k}{X}}} + 1 \right) + m_2 \floor{\frac{X}{2k}}.
    \]
    Observe that that for any $0 < a \in \R$ we have the inequality
    \[m_1(a+1) + m_2a \leq m_1m_2(a+1) + a = X(a+1) + a\]
    and hence
    \begin{equation}\label{funine}
      e \leq X \left( \floor{ {\frac{2k}{X} } } + 1 \right)
      + \floor {\frac{2k}{X}}
      = X \floor {\frac{2k}{X}} + X + \floor {\frac{2k}{X}}.
    \end{equation}

    Let $f(x) = x + \frac {2k}{x}$.  Since $f$ is strictly
    convex on $(0,\infty)$, it follows that
    %
    %
    $f(X) \leq \max(f(3),f(k-2)) = \max ( 3 + \frac {2k}{3}, k - 2 +
    \frac{2k}{k-2} ).$
    Hence  $f(X) < k+1$ for $k > 6$. We also note that in the
    range $[3,k-2]$, $f(x)$ has a unique minimum at $x = \sqrt
    {2k}$.  Furthermore, $f(x)$ is strictly increasing after
    that point.

    Consider the function $g(x) = x + \floor{\frac {2k}x}$ on
    the interval $x \in (0,\infty)$.   Note that $g(x)$  is
    piecewise linear, where each linear piece has slope 1, and
    is continuous from the left, with jumps at $x_i =
    \frac{2k}{i}$ for $i \in \N$.  So $g(x_i) = f(x_i)$ for $i
    \in N$, and $g(x) <f(x) $ if $0 < x \neq x_i$ for $i \in
    N$. This implies $g(X) = X + \floor{\frac {2k} X} \leq
    f(X)< k+1$ for $k > 6$. Since $g(X)$ is an integer, we
    deduce that $g(X) \leq k$ when $k > 6$ and $X \in [3,k-2]$.
    Use (\ref{funine}) to deduce that
    \[e\leq X \floor{\frac {2k}{X}} + g(X)
    \leq X \frac {2k}{X} + g(X) \leq 2k+k = 3k.\]
    This contradicts the assumption that $e=3k+1$,
    and completes Case~2.

    {\bf Case 3:}
%
$m_1+m_2=3$.
    Then $m_1m_2 = 2$ and $e = 3n_1 + m_1n_2$. Assume first
    that $m_1 = 2$ and $m_2=1$. Since $e = 3k + 1$ it follows
    that $n_2 \geq 2$.  So $3n_1n_2 = n_2(e-2n_2)$.  Since $e
    \geq 22$, the minimum of $n_1n_2$ is achieved either for
    $n_2 = 2$ or for the maximum possible value of $n_2$
    obtained when $n_1 = 1$. For $n_2 = 2$ we have $n_1 = k-1$
    and  $\omega = 4(k-1) > 2k$ if $k > 2$. For $n_1 = 1$ we
    have $n_2 = \frac{e-3}{2}$, which may not be an integer,
    and $\omega = (e-3) = 3k-2 > 2k$ for $k>2$.

    Assume finally that $m_1 = $ and $m_2 = 2$.  Then $e = 3n_1
    + n_2$.  Lemma~\ref{auxmin} yields that $n_1n_2 \geq
    \frac{e-1}{3} = k$, and equality holds if and only if $n_2
    = 1$ and $n_1 = k$. This completes Case~3 and the proof of
    the theorem.
  \end{proof}

  We used software to show that Theorem \ref{e=3k+1} holds
  for $k=2,3,4,5,6$.

 \section{C-matrices}

 Let $\R^p_{+\searrow} := \{\c = (c_1,\ldots,c_p) \in \R^p, \; c_1 \geq \cdots \geq
 c_p \geq 0\}$.  With each $\c \in \R^p_{+\searrow}$ we associate the
 following symmetric matrix.
 \begin{equation}\label{eq:defmx}
    M(\c) = [c_{\min(i,j)}]_{i,j=1}^p.
 \end{equation}
 The following result is
 well-known \cite[\S3.3, pp.110--111]{Kar}.
 \begin{prop}\label{prop:tpmx}
   Let $\c = (c_1, \ldots, c_p) \in \R^p_{+\searrow}$.
   Then all the minors of $M(\c)$ are nonnegative.  In
   particular $M(\c)$ is a nonnegative definite matrix.
   If $c_1 > \cdots > c_p > 0$, then all the principal minors of $M(\c)$ are
   positive, i.e., $M(\c)$ is positive definite.
 \end{prop}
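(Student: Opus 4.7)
The plan is an explicit factorization $M(\c) = L D L^\trans$, with $L$ a triangular $0$--$1$ matrix and $D$ a diagonal matrix whose entries are the non-negative consecutive differences of $\c$, and then Cauchy--Binet to read off every minor's sign at once.

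Set $d_k := c_k - c_{k+1}$ for $k = 1, \ldots, p-1$ and $d_p := c_p$, with the convention $c_{p+1} := 0$. The hypothesis $c_1 \geq \cdots \geq c_p \geq 0$ gives $d_k \geq 0$ for every $k$, and telescoping yields $c_m = \sum_{k=m}^p d_k$. Let $L$ be the upper-triangular $0$--$1$ matrix with $L_{i,k} := \mathbf{1}[k \geq i]$ and $D := \mathrm{diag}(d_1,\ldots,d_p)$. A direct computation gives
$$(L D L^\trans)_{ij} = \sum_{k=1}^p d_k\, \mathbf{1}[k \geq i]\, \mathbf{1}[k \geq j] = \sum_{k \geq \max(i,j)} d_k,$$
which realises the matrix of the statement (under the natural reading of the indices for a decreasing sequence $\c$, for which the telescoping naturally produces $c_{\max(i,j)}$ in the role of $c_{\min(i,j)}$) as $L D L^\trans$.

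For any $r$-subsets $I, J \subseteq \{1,\ldots,p\}$, Cauchy--Binet yields
$$\det M(\c)[I,J] = \sum_{|K|=r} \bigl(\det L[I,K]\bigr)\bigl(\det L[J,K]\bigr) \prod_{k \in K} d_k.$$
The combinatorial heart of the argument is the claim that every minor of the triangular $0$--$1$ matrix $L$ is either $0$ or $+1$: row-reducing $L[I,K]$ by subtracting each row from the preceding one leaves, in row $a < r$, ones exactly in columns $b$ with $i_a \leq k_b < i_{a+1}$; these column sets are disjoint across rows, so the row-reduced matrix has determinant $0$ unless each of these sets is a singleton, in which case the identity-permutation term gives determinant $+1$. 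With every $\det L$-factor in $\{0,1\}$ and each $d_k \geq 0$, every term of the Cauchy--Binet sum is non-negative, hence $\det M(\c)[I,J] \geq 0$. Positive semidefiniteness is the diagonal case $I = J$.

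For the strict case $c_1 > c_2 > \cdots > c_p > 0$, every $d_k > 0$. Inspecting the $s \times s$ leading principal minor ($I = J = \{1, \ldots, s\}$), the combinatorial criterion forces the non-vanishing $K$'s in Cauchy--Binet to be exactly those of the form $\{1, 2, \ldots, s-1, j\}$ with $j \in \{s, \ldots, p\}$, and the sum evaluates to $d_1 \cdots d_{s-1} \cdot c_s > 0$. Every leading principal minor of $M(\c)$ is positive, so Sylvester's criterion yields positive definiteness. The main technical obstacle is the combinatorial lemma on minors of $L$; once that is nailed down, Cauchy--Binet does everything else.
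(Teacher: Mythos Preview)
Your argument is correct and complete. The paper does not actually prove this proposition; it simply cites Karlin's \emph{Total Positivity} for the result. Your explicit factorization $M(\c)=LDL^\trans$ with $D=\mathrm{diag}(c_1-c_2,\ldots,c_{p-1}-c_p,c_p)$, followed by Cauchy--Binet and the combinatorial lemma that every minor of the upper-triangular all-ones matrix $L$ lies in $\{0,1\}$, is a clean self-contained route to total nonnegativity, and the leading-principal-minor computation $d_1\cdots d_{s-1}\,c_s>0$ handles the strict case via Sylvester.

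One point should be stated plainly rather than buried in your parenthetical. The paper's formula $M(\c)=[c_{\min(i,j)}]$ is a typo: already for $p=2$ and $c_1\geq c_2$ the determinant is $c_1c_2-c_1^2=c_1(c_2-c_1)\leq 0$, so that matrix is not even positive semidefinite. The intended definition is $M(\c)=[c_{\max(i,j)}]$, and the paper's own later use confirms this: the identity $A(\d)A(\d)^\trans=M(\d)$ for chain graphs has entries $\min(d_i,d_j)=d_{\max(i,j)}$, and the formula $\tr M(\c)^2=\sum_i(2i-1)c_i^2$ in the subsequent proposition counts pairs with $\max(i,j)=k$, not $\min$. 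Your factorization produces exactly $c_{\max(i,j)}$, so you are proving the correct statement; just say so directly rather than through the somewhat opaque aside about ``the natural reading of the indices.''
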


 \begin{corol}\label{cor:tpmx}
   Let $\c = (c_1,\ldots,c_p) \in \R^p_{+\searrow}$.
   Then the rank of $M(\c)$ is equal to the number of
   distinct positive elements in $\set{c_1,\ldots,c_p}$.
 \end{corol}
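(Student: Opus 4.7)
The plan is to sandwich $\mathrm{rank}(M(\c))$ between $k$ and $k$: I will prove $\mathrm{rank}(M(\c)) \geq k$ by exhibiting a $k \times k$ principal submatrix to which Proposition~\ref{prop:tpmx} applies, and $\mathrm{rank}(M(\c)) \leq k$ by factoring $M(\c)$ through that smaller matrix. Write $\tilde c_1 > \tilde c_2 > \cdots > \tilde c_k > 0$ for the $k$ distinct positive entries of $\c$ in decreasing order.

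For the lower bound, I will use that $\c$ is nonincreasing to choose indices $i_1 < i_2 < \cdots < i_k$ in $\{1, \ldots, p\}$ with $c_{i_s} = \tilde c_s$ for every $s$ (take, say, $i_s$ to be the smallest index at which $\c$ attains the value $\tilde c_s$). Then the principal submatrix $N$ of $M(\c)$ indexed by $\{i_1, \ldots, i_k\}$ satisfies
\[
 N_{s,t} \;=\; c_{\min(i_s, i_t)} \;=\; c_{i_{\min(s,t)}} \;=\; \tilde c_{\min(s,t)},
\]
so $N = M(\tilde{\c})$. Since $\tilde c_1 > \cdots > \tilde c_k > 0$ are strictly positive and strictly decreasing, Proposition~\ref{prop:tpmx} gives that $M(\tilde{\c})$ is positive definite, and in particular $\mathrm{rank}(N) = k$. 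Thus $\mathrm{rank}(M(\c)) \geq k$.

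For the matching upper bound, I will set up an explicit factorization. For each $i$ with $c_i > 0$ let $s_i \in \{1, \ldots, k\}$ be the unique index satisfying $c_i = \tilde c_{s_i}$, and define the $p \times k$ matrix $P$ of $0$'s and $1$'s by $P_{i,s} = 1$ iff $s = s_i$. Because $\c$ is nonincreasing, $c_{\min(i,j)} = \max(c_i, c_j) = \tilde c_{\min(s_i, s_j)}$ for all $i,j$, which is precisely the entrywise identity $M(\c) = P\, M(\tilde{\c})\, P\trans$. Hence $\mathrm{rank}(M(\c)) \leq \mathrm{rank}(M(\tilde{\c})) = k$, and combining this with the lower bound closes the sandwich. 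The one step that is not automatic from the definitions is the verification of the factorization identity, which is a direct entrywise computation using only the monotonicity of $\c$ together with the defining relation $M(\c)_{ij} = c_{\min(i,j)}$.
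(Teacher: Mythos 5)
Your proof has the same skeleton as the paper's: the lower bound comes from the $k\times k$ principal submatrix indexed by one representative of each distinct positive value, which equals $M(\tilde{\c})$ and has full rank by Proposition~\ref{prop:tpmx}; the upper bound is the observation that $M(\c)$ only "sees" the $k$ distinct positive values. The paper dispatches the upper bound in one sentence ("hence the rank of $M(\c)$ is at most $k$"), and your factorization $M(\c)=P\,M(\tilde{\c})\,P\trans$ is a clean explicit justification of that step, so in that respect your write-up is more complete than the paper's.

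There is, however, one step that is false as literally written, and it comes from how you resolved the notation in~(\ref{eq:defmx}). You read $c_{\min(i,j)}$ as $\c$ evaluated at the index $\min(i,j)$, i.e.\ $\max(c_i,c_j)$ by monotonicity. Under that reading the claimed entrywise identity $M(\c)=P\,M(\tilde{\c})\,P\trans$ fails whenever some $c_j=0$: row $j$ of $P$ is then zero, so the right-hand side has a zero $j$-th row and column, while $\max(c_i,c_j)=c_i>0$ for $i<j$. Worse, under that reading the corollary itself is false ($\c=(1,0)$ gives $\twobytwo{1}{1}{1}{0}$, of rank $2$), and so is Proposition~\ref{prop:tpmx} ($\c=(2,1)$ gives $\twobytwo{2}{2}{2}{1}$, with determinant $-2$). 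The definition must be read as $M(\c)_{i,j}=\min(c_i,c_j)$, equivalently $c_{\max(i,j)}$ for nonincreasing $\c$, as is forced by the identity $A(\d)A(\d)\trans=M(\d)$ used later in Section~7. With that reading every step of your argument goes through: the chosen principal submatrix is still $M(\tilde{\c})$, and the factorization holds including the zero rows and columns, since $\min(c_i,c_j)=0$ there. So the fix is a one-character change in the convention, but as submitted the "direct entrywise computation" you defer to would not check out on the pairs $(i,j)$ with one zero coordinate.
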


 \begin{proof}
   Let $\set{c_{i_1},\ldots,c_{i_k}}$ be the set of all
   distinct positive elements in $\set{c_1,\ldots,c_p}$.
   Hence the rank of $M(\c)$ is at most $k$.
   Let $F$ be the principal submatrix of $M(\c)$
   based on the rows and columns $\set{i_1,\ldots,i_k}$.
   Proposition~\ref{prop:tpmx} yields that rank $F = k$.
 \end{proof}

 In what follows we assume that $\c = (c_1,\ldots,c_p)\in
 \R^p_{+\searrow}$ unless stated otherwise. Assume that $c_1
 \geq \cdots \geq c_m > 0 = c_{m+1} = \cdots = c_p$.  Denote by
 $\c_+ := (c_1,\ldots,c_m)$.  Then $M(\c_+)$ is the principal
 submatrix of $M(\c)$ obtained from $M(\c)$ by deleting the
 last $p-m$ zero rows and columns. Let
 \[\lambda_1(\c) \geq \lambda_2(\c) \geq \cdots \geq \lambda_p(\c) \geq 0\]
 be the $p$ eigenvalues of $M(\c)$. Let $m'$ be the number of
 distinct elements in $\{c_1,\ldots,c_m\}$.
 Corollary~\ref{cor:tpmx} yields that $M(\c)$ has exactly $m'$
 positive eigenvalues and $\lambda_i(\c) = \lambda_i(\c_+)$ for
 $i = 1,\ldots,m$.

 Let $\rS_m(\R)$ be the space of $m \times m$ real symmetric
 matrices. Since $\lambda_1(M)$ is a convex function on $\rS_m(\R)$
 by
 \[\max_{\norm{\z} = 1 } \z \trans \frac{A+B}{2} \z
 \leq \frac{\max_{\norm{\x} = 1} \x \trans A \x + \max_{\norm{\y} = 1} \y \trans B \y}{2},\]
 we obtain the following result.

 \begin{prop}\label{prop:convp}
   Let $\rC \subset \R^p_{+\searrow}$
   be a compact convex set.  Let $\cE(\rC)$ be the set of the extreme
   points of $\rC$.  Then $\max_{\c \in \rC} \lambda_1(\c) = \max_{\c \in \cE(C)}
   \lambda_1(\c)$.
 \end{prop}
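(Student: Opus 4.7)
The plan is to derive the statement from the Bauer maximum principle: a convex upper semicontinuous function on a compact convex set attains its maximum at some extreme point. Only two ingredients have to be checked, namely (i) that $\c \mapsto \lambda_1(\c)$ is convex on $\R^p_{+\searrow}$, and (ii) that it is continuous on the same set.

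For (i), I would first record the crucial but easy observation that the map $\c \mapsto M(\c)$ is a \emph{linear} map $\R^p \to \rS_p(\R)$. Indeed, by~(\ref{eq:defmx}) the $(i,j)$ entry $c_{\min(i,j)}$ is a coordinate functional of $\c$, so $M(\alpha\c + \beta \c') = \alpha M(\c) + \beta M(\c')$. The convexity inequality displayed in the paragraph just before Proposition~\ref{prop:convp} shows that $\lambda_1 \colon \rS_p(\R) \to \R$ is convex as a function of a symmetric matrix. Composing a convex function with a linear (hence affine) map preserves convexity, so $\c \mapsto \lambda_1(M(\c)) = \lambda_1(\c)$ is convex on $\R^p$, and in particular on any convex subset such as $\rC$.

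For (ii), continuity follows because the entries of $M(\c)$ depend continuously (linearly) on $\c$, and the largest eigenvalue of a symmetric matrix is a continuous function of its entries (for instance by the min–max characterization). Together, (i) and (ii) let me invoke Bauer's principle directly, yielding $\max_{\c \in \rC} \lambda_1(\c) = \max_{\c \in \cE(\rC)} \lambda_1(\c)$.

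If the paper prefers an elementary finish avoiding a cited principle, I would instead apply the Krein–Milman theorem to the compact convex set $\rC \subset \R^p$, writing any $\c \in \rC$ via Carath\'eodory's theorem as a finite convex combination $\c = \sum_{k=1}^N t_k \c^{(k)}$ with $\c^{(k)} \in \cE(\rC)$, and then using convexity to conclude
\[
\lambda_1(\c) \;\leq\; \sum_{k=1}^N t_k\, \lambda_1(\c^{(k)}) \;\leq\; \max_{\c' \in \cE(\rC)} \lambda_1(\c').
\]
Taking the supremum over $\c \in \rC$ yields one inequality, and the reverse is trivial from $\cE(\rC) \subset \rC$. There is no real obstacle here: the only content beyond standard convex analysis is the linearity of $\c \mapsto M(\c)$, and the convexity of $\lambda_1$ on $\rS_p(\R)$ is already established in the text preceding the proposition.
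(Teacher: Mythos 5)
Your proposal is correct and follows essentially the same route as the paper, which simply notes the convexity of $\lambda_1$ on symmetric matrices (via the displayed inequality preceding the proposition) and concludes; you merely make explicit the linearity of $\c \mapsto M(\c)$ and the standard extreme-point argument that the paper leaves implicit. No gaps.
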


 We show that
 $\lambda_1(G_D) = \lambda_1(\d)$ for a corresponding vector $\d \in
 \R^p_{+\searrow}$.  Let $D = \set{d_1 \geq d_2 \geq \cdots \geq d_m}$
 be a set of positive integers.  Assume that $m \leq p$ and let $\d = (d_1,\ldots,d_m,0,\ldots,0) \in
 \R^p_{+\searrow}$.  Let $G(\d)$ denote the chain graph
 with degrees $\d$, that is to say the chain graph $G_D$ with $p-m$
 additional isolated vertices, where $D = \set{d_1,\ldots,d_m}$.
 Let $A(\d)$ be the representation matrix of $G(\d)$.  Note that $A(\d_+)$
 is the representation matrix of $G_D$.  Clearly $A(\d)A(\d)\trans
 = M(\d)$.  Hence

 \begin{equation}\label{eqMd}
   \lm(G_D)^2 = \lambda_1(\d) = \lambda_1(\d_+).
 \end{equation}

 Thus $M(\d)$ can be viewed as a continuous version of $G(\d)$.
 The main idea of the proof of Conjecture~\ref{bipcone},
 under the conditions discussed in the Introduction,
 is to replace the maximum discussed in Problem~\ref{bipprob}
 with the maximization problem discussed in Proposition~\ref{prop:convp}
 with a carefully chosen $\rC$.

 We now bring a few inequalities for $\lambda_1(\d)$ needed later,
 which can be viewed as a generalizations of Proposition~\ref{upestimlm}
 and Theorem~\ref{theo:estimate}.

 \begin{prop}\label{basestl1e}
   Let $\c=(c_1.\ldots,c_p) \in \R^p_{+\searrow}$.  Then

   \begin{eqnarray}\label{trace1}
     && e(\c) : = \tr M(\c) = \sum_{i=1}^p c_i = \sum_{i=1}^p \lambda_i(\c),\\
     && \sum_{i=1}^p \lambda_i(\c)^2 = \tr M(\c)^2 = \sum_{i=1}^p (2i-1)c_i^2,
     \label{trace2}\\
     && \sum_{1 \leq i < j \leq p} \lambda_i(\c) \lambda_j(\c) = \sum_{1 \leq i <
     j \leq p} c_j(c_i-c_j). \label{2elpol}
   \end{eqnarray}
   Hence $\lambda_1(\c) \leq e(\c) $.  Equality holds if and only if
   $M(\c)$ has rank one.  Moreover,
   \begin{equation}\label{est1}
     \lambda_1(\c) \leq \sqrt{\sum_{i=1}^p (2i-1)c_i^2}.
   \end{equation}
   A sharper upper estimate of $\lambda_1(\c)$ for $p \geq 2$ is given
   as follows.  Assume that the set $\set{c_1,\ldots,c_p}$
   consists of $h \geq 2$ distinct positive numbers.  Then
   \begin{gather}\label{maxest}
     \lambda_1(\c) \leq
     \frac{(2\alpha_h-1)e(\c) + \sqrt{ e(\c)^2-4\alpha_h\beta}}{2\alpha_h},
     \\
     \intertext{where}
     \alpha_h = \frac{h}{2(h-1)},
     \qquad
     \beta = \sum_{1 \leq i < j \leq p} c_j(c_i-c_j).
     \notag
   \end{gather}
   For a fixed $\beta$, the right-hand side of~(\ref{maxest}) is an
   increasing sequence for $h = 2,3,\ldots$, and its limit as $h \to
   \infty$ is the right-hand side of~(\ref{est1}).
 \end{prop}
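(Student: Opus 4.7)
My plan is to handle the proposition in three stages: (i) the three trace identities (\ref{trace1})--(\ref{2elpol}); (ii) the elementary bounds $\lambda_1(\c)\le e(\c)$ and (\ref{est1}); and (iii) the sharper estimate (\ref{maxest}) together with its monotonicity and limit behavior.

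For (i), the diagonal of $M(\c)$ is $(c_1,\ldots,c_p)$ by definition, so $\tr M(\c)=\sum_i c_i$, which equals $\sum_i\lambda_i(\c)$ by the symmetry of $M(\c)$, proving (\ref{trace1}). For (\ref{trace2}) I would expand $\tr M(\c)^2=\sum_{i,j}M(\c)_{i,j}^2$ and collect terms according to the value $c_k$ taken by each matrix entry; a direct combinatorial count gives exactly $2k-1$ entries equal to $c_k$, yielding $\sum_k(2k-1)c_k^2$. Identity (\ref{2elpol}) then follows from Newton's relation $2\sum_{i<j}\lambda_i\lambda_j=(\sum_i\lambda_i)^2-\sum_i\lambda_i^2$ together with the algebraic rearrangement $(\sum_i c_i)^2-\sum_i(2i-1)c_i^2=2\sum_{i<j}c_j(c_i-c_j)$.

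For (ii), Proposition~\ref{prop:tpmx} guarantees $\lambda_i(\c)\ge 0$ for all $i$, so $\lambda_1(\c)\le\sum_i\lambda_i(\c)=e(\c)$, with equality iff $\lambda_2(\c)=\cdots=\lambda_p(\c)=0$, i.e.\ iff $M(\c)$ has rank one; the estimate (\ref{est1}) follows from $\lambda_1(\c)^2\le\sum_i\lambda_i(\c)^2=\sum_i(2i-1)c_i^2$. For (iii), Corollary~\ref{cor:tpmx} implies exactly $h$ eigenvalues are positive: $\lambda_1\ge\cdots\ge\lambda_h>0$, with $\lambda_{h+1}=\cdots=\lambda_p=0$. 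Set $s:=e(\c)$, $\mu:=s-\lambda_1=\lambda_2+\cdots+\lambda_h$, and recall from (\ref{2elpol}) that $\beta=\sum_{i<j}\lambda_i\lambda_j$. Applying the Cauchy--Schwarz (equivalently, power-mean) inequality to the $h-1$ nonnegative reals $\lambda_2,\ldots,\lambda_h$ gives
\[
  \sum_{2\le i<j\le h}\lambda_i\lambda_j
  =\tfrac12\Bigl(\mu^2-\sum_{i\ge 2}\lambda_i^2\Bigr)
  \le\frac{(h-2)\mu^2}{2(h-1)}.
\]
Substituting into $\beta=\lambda_1\mu+\sum_{2\le i<j\le h}\lambda_i\lambda_j$ and using $\alpha_h=\tfrac{h}{2(h-1)}$ produces the quadratic inequality $\alpha_h\lambda_1^2-(2\alpha_h-1)s\lambda_1+(\alpha_h-1)s^2+\beta\le 0$, whose solution for $\lambda_1$ is exactly (\ref{maxest}).

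For the monotonicity in $h$ and the limit, I regard the right-hand side of (\ref{maxest}) as $f(\alpha)=\frac{(2\alpha-1)s+\sqrt{s^2-4\alpha\beta}}{2\alpha}$ evaluated at $\alpha=\alpha_h$, noting that $\alpha_h$ strictly decreases from $1$ at $h=2$ toward $\tfrac12$ as $h\to\infty$. Rewriting $f(\alpha)=s+\frac{\sqrt{s^2-4\alpha\beta}-s}{2\alpha}$, one sees that both the nonpositive quantity $\sqrt{s^2-4\alpha\beta}-s$ increases toward $0$ and $1/(2\alpha)$ increases as $\alpha$ decreases, making $f(\alpha)$ increase; hence $f(\alpha_h)$ is increasing in $h$. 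Taking the limit $\alpha\to\tfrac12$ gives $f(\tfrac12)=\sqrt{s^2-2\beta}$, and combining (\ref{trace1})--(\ref{2elpol}) yields $s^2-2\beta=\sum_i\lambda_i^2=\sum_i(2i-1)c_i^2$, the right-hand side of (\ref{est1}). The main obstacle is the algebra required to reduce the Cauchy--Schwarz step to the clean quadratic form above; everything else is routine.
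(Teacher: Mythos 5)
Your treatment of the identities (\ref{trace1})--(\ref{2elpol}), of the bounds $\lambda_1(\c)\leq e(\c)$ and (\ref{est1}), and of the main estimate (\ref{maxest}) is correct and follows essentially the same route as the paper: the trace computations, Newton's identity (\ref{newtid2}), positive semidefiniteness from Proposition~\ref{prop:tpmx}, the rank count from Corollary~\ref{cor:tpmx}, and your Cauchy--Schwarz bound $\sum_{2\leq i<j\leq h}\lambda_i\lambda_j\leq\frac{(h-2)\mu^2}{2(h-1)}$ is exactly the paper's ``replace $\lambda_2,\dots,\lambda_h$ by their arithmetic mean'' step, leading to the same quadratic in $\lambda_1$ whose larger root is the right-hand side of (\ref{maxest}). (Your count of $2k-1$ entries equal to $c_k$ presumes $M(\c)=[c_{\max(i,j)}]$, which is what $A(\d)A(\d)\trans$ actually equals; the $\min$ in (\ref{eq:defmx}) is evidently a typo, so you are using the intended convention.)

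The one genuine flaw is your justification of the monotonicity in $h$. From $f(\alpha)=s+\frac{\sqrt{s^2-4\alpha\beta}-s}{2\alpha}$ you argue that, as $\alpha$ decreases, the nonpositive numerator increases and $\frac{1}{2\alpha}$ increases, ``making $f(\alpha)$ increase.'' That inference is invalid: the product of an increasing nonpositive quantity with an increasing positive quantity need not increase (if $N$ goes from $-2$ to $-1$ while $P$ goes from $1$ to $4$, then $NP$ goes from $-2$ to $-4$). Indeed, for $N\leq 0$ the growth of $\frac{1}{2\alpha}$ pushes $\frac{N}{2\alpha}$ \emph{down}, so the two effects you cite pull in opposite directions. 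The claim itself is true and the repair is short: writing $u=\sqrt{s^2-4\alpha\beta}$, a direct computation gives $f'(\alpha)=-\frac{(u-s)^2}{4\alpha^2u}\leq 0$, so $f$ decreases in $\alpha$ and hence $f(\alpha_h)$ increases in $h$. Alternatively, argue as the paper does: the left-hand side of the defining equation (\ref{quadonx}) increases pointwise with $h$ on $(0,e)$, so the larger root increases with $h$. Your limit computation $f(\tfrac12)=\sqrt{s^2-2\beta}=\sqrt{\sum_{i=1}^p(2i-1)c_i^2}$ is fine.
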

 \begin{Proof}
   The equalities~(\ref{trace1}), (\ref{trace2}) are
   straightforward.  The equality (\ref{2elpol}) follows from
   them and the identity
   \begin{equation}\label{newtid2}
     2\sum_{1 \leq  i < j \leq p} \lambda_i \lambda_j =
     \left(\sum_{i=1}^p \lambda_i\right)^2 - \sum_{i=1}^p \lambda_i^2.
   \end{equation}
   Since all $\lambda_i(\c)$ are real, the inequality (\ref{est1})
   follows from~(\ref{trace2}).


   We now show how the estimate (\ref{maxest}) follows
   from~(\ref{2elpol}). By Corollary~\ref{cor:tpmx} the rank of $M(\c)$ is
   exactly $h$, and therefore exactly $h$ eigenvalues of $M(\d)$ are
   positive. Thus in the left-hand side of the
   equalities (\ref{trace1})--(\ref{2elpol}), $i$ and $j$ can run from
   $1$ to $h$ only.
   For $h=2$, (\ref{maxest}) follows with equality
   from~(\ref{trace1}) and (\ref{2elpol}).

   Assume that $h \geq 3$.  We let
   $\lambda_1(\c) = x$ and $e(\c) = e$ and rewrite (\ref{2elpol}) as follows:
   \begin{equation}\label{separatethex}
     x(e-x) = \beta - \sum_{2 \leq i < j \leq h} \lambda_i(\c)\lambda_j(\c).
   \end{equation}
   In~(\ref{separatethex}) we are free to choose
   $\lambda_2(\c),\ldots, \lambda_h(\c)$ subject to $x + \sum_{2 \leq
   i \leq h} \lambda_i(\c) = e$, and wish to maximize $x$.

   Observe that the right-hand side of (\ref{separatethex}) is
   always nonnegative by (\ref{2elpol}) and the definition of
   $\beta$. Thus~(\ref{separatethex}) has two solutions $x$
   between $0$ and $e$, and we are interested in the larger one
   and want to maximize it. This is equivalent to choosing
   $\lambda_2(\c),\ldots, \lambda_h(\c)$ so as to minimize the
   right-hand side of (\ref{separatethex}), or equivalently to
   maximize $\sum_{2 \leq i < j \leq h}
   \lambda_i(\c)\lambda_j(\c)$. It is well-known that a sum of
   the form $\sum_{1 \leq i < j \leq k} a_i a_j$ can only
   increase if the $a_i$ are each replaced by their arithmetic
   mean $\frac{\sum a_i}{k}$. Indeed, $(\sum 1 \cdot a_i)^2
   \leq (\sum 1^2) (\sum a_i^2)= k \sum a_i^2$. Therefore $2
   \sum_{\i < j} a_i a_j = (\sum a_i)^2 - \sum a_i^2 \leq (\sum
   a_i)^2 - \frac{1}{k} (\sum a_i)^2 = (\sum a_i)^2
   \frac{k-1}{k} = 2 \binom{k}{2} \left(\frac{\sum a_i}{k}
   \right)^2$.

   Thus the upper estimate on $x$ is achieved if we set each of
   $\lambda_2(\c), \ldots, \lambda_h(\c)$ equal to $y$. Then
   $x$ and $y$ are subject to $x+(h-1)y = e$ and  $x(e-x) +
   \binom{h-1}{2} y^2 = \beta$. Eliminating $y$, we see that
   $x$ should satisfy
   \begin{equation}\label{quadonx}
     x(e-x) + \frac{h-2}{2(h-1)} (e-x)^2 = \beta,
   \end{equation}
   and the larger solution of~(\ref{quadonx}) yields (\ref{maxest})
   for $h \geq 3$.

   The left-hand side of~(\ref{quadonx}) is a quadratic in $x$,
   which is positive for $0 < x < e$ and increases with $h$.
   Therefore the larger solution of~(\ref{quadonx}) increases
   with $h$. When we take~(\ref{quadonx}) to the limit $h \to
   \infty$, we obtain $x(e-x) + \frac{1}{2}(e-x)^2 = \beta$, or
   equivalently
   \begin{equation}\label{limitingequation}
     x^2 = e^2 - 2\beta = \sum_{i=1}^p (2i-1)c_i^2,
   \end{equation}
   and the positive solution of~(\ref{limitingequation})
   is equal to the right-hand side of~(\ref{est1}).
 \end{Proof}


 \section{A proof of Conjecture~\ref{bipcone} in certain cases}

 \begin{theo}\label{thm: prfmconj}
   Let $2 \leq r \leq l$ be two positive integers.  Assume that
   $e = rl+r-1$.
   Suppose one of the following conditions holds:
     \begin{enumerate}
       \item $r = 2$ and $2 \leq p \leq q$, $l =
           \frac{e-1}{2} < q$;
       \item $3 \leq r \leq l$ and $r \leq p \leq l+1 \leq
           q \leq l + 1 + \frac{l}{r-1}$.
     \end{enumerate}
   Let $G_{r,l+1}$ be the graph obtained from
   $K_{r-1,l+1}$ by adding one vertex to the group of $r-1$ vertices
   and connecting it to $l$ vertices in the group of $l+1$ vertices.
   Then $\lm(G) \leq \lm(G_{r,l+1})$, for all $G \in \cK(p,q,e)$.
   Equality holds if and only if $G$ is isomorphic to $G_{r,l+1}$.
 \end{theo}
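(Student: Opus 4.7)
\emph{Plan of proof.} By Corollary~\ref{cor:optsolmpr} any maximizer in $\cK(p,q,e)$ is a chain graph $G_D$ with $m\le p$ and $d_1\le q$, so I specialize to chain graphs from the outset. Writing $\d=(d_1,\ldots,d_m,0,\ldots,0)\in\R^p_{+\searrow}$, the identity~(\ref{eqMd}) turns the desired bound $\lm(G_D)\le\lm(G_{r,l+1})$ into $\lambda_1(\d)\le\lambda_1(\d^\star)$, where $\d^\star$ denotes the padded degree vector of $G_{r,l+1}$. Let $h$ be the number of distinct positive entries of $\d$; since $G_D$ is not complete bipartite, $h\ge 2$, and I split according to whether $h=2$ or $h\ge 3$.

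When $h=2$ the chain graph's Ferrers diagram has exactly two row lengths, so Theorem~\ref{theo:estimate} makes (\ref{upestopg}) an equality, giving $\lm(G_D)^2=\tfrac12\bigl(e+\sqrt{e^2-4\omega(G_D)}\bigr)$ with $\omega(G_D)=m_1m_2n_1n_2$. Proposition~\ref{prop:upbndm} forces $m_1+m_2\ge r$ and $n_1+n_2\ge r$, so Theorem~\ref{e=rk+r-1} (or its $r=2$ specialization via Theorem~\ref{cor:thme=rk+r-1}) yields $\omega(G_D)\ge(r-1)(e-r+1)/r=(r-1)l$, with equality only at $(m_1,m_2,n_1,n_2)=(r-1,1,l,1)$ or its dual. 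Since the displayed formula is decreasing in $\omega$, this settles the theorem for $h=2$ and pinpoints $G_{r,l+1}$ as the unique extremizer in that range.

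For $h\ge3$ the plan is to reduce to the previous case by the C-matrix convexity of \S7. I would exhibit a compact convex set $\rC\subset\R^p_{+\searrow}$ containing $\d$ whose extreme points all have at most two distinct positive levels---a natural starting candidate is the polytope $\{\c\in\R^p_{+\searrow}:\sum c_i=e,\,c_1\le q\}$ trimmed by further linear inequalities adapted to the block structure of $\d$. Proposition~\ref{prop:convp} then gives $\lambda_1(\d)\le\max_{\c\in\cE(\rC)}\lambda_1(\c)$, and since Theorem~\ref{e=rk+r-1} is already phrased for real-valued $(m_1,m_2,n_1,n_2)$ and Theorem~\ref{theo:estimate} turns (\ref{upestopg}) into an equality on every two-level vector, the $h=2$ analysis applies verbatim at each extreme point $\c^\sharp$ to give $\lambda_1(\c^\sharp)\le\lambda_1(\d^\star)$. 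Tracing when equality can propagate through Proposition~\ref{prop:convp} and the $h=2$ bound then forces $\d=\d^\star$.

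The chief obstacle is pinning down $\rC$. The naive slab $\{\sum c_i=e,\,c_1\le q\}$ carries spurious one-level extreme points $(t,\ldots,t,0,\ldots,0)$ with $mt=e$, at which $M(\c)$ has rank one and $\lambda_1(\c)=e$, strictly exceeding $\lambda_1(\d^\star)$; these are continuous avatars of the excluded complete bipartite case and must be carved off by additional inequalities. The hypotheses $r\le p\le l+1$ and $q\le l+1+l/(r-1)$ are exactly what let me enforce the continuous analog of~(\ref{constrr})---namely $m'_1+m'_2\ge r$ and $n'_1+n'_2\ge r$---on $\rC$, thereby killing the spurious one-level extremes while keeping every relevant $\d$ inside. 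Formalizing this cut, verifying that each remaining extreme point is a two-level vector satisfying the hypotheses of Theorem~\ref{e=rk+r-1}, and checking the equality case is the technical heart of the argument.
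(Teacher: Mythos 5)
Your skeleton is right---reduce to chain graphs via Corollary~\ref{cor:optsolmpr}, pass to the continuous $C$-matrix picture, use convexity of $\lambda_1$ to push the problem onto two-level vectors, and then invoke Theorem~\ref{e=rk+r-1} together with the rank-two eigenvalue formula $\lambda_1=\frac{e+\sqrt{e^2-4\omega}}{2}$. But the step you yourself flag as ``the technical heart'' is genuinely missing, and the candidate you propose for it does not work as stated. A single global polytope $\{\c\in\R^p_{+\searrow}:\sum c_i=e,\ c_1\leq q\}$, however trimmed by inequalities of the form $m'_1+m'_2\geq r$, is not the right object: its extreme points are not generically two-level vectors, and the inequalities (\ref{constrr}) are constraints on the block structure $(m_1,m_2,n_1,n_2)$ of a two-level vector, not linear functionals of $\c$ that you can impose on all of $\R^p_{+\searrow}$. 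Without an explicit $\rC$ whose extreme points are identified, the $h\geq 3$ case is an announced plan rather than a proof.

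What the paper does instead is build a simplex \emph{adapted to each} $\d$ rather than one universal polytope. Writing $\delta_i=d_i-d_m$, $e'=e-md_m=\sum_i\delta_i$, and letting $s$ be the number of positive $\delta_i$, the vector $\bm{\delta}$ lies in the standard simplex $P=\{x_1\geq\cdots\geq x_s\geq 0,\ \sum x_i=e'\}$, whose extreme points are $\frac{e'}{i}\1_{s,i}$. Adding back the baseline gives $\d\in\conv\{\a_1(\d),\ldots,\a_s(\d)\}$ with $\a_k(\d)=\frac{e'}{k}\1_{m,k}+d_m\1_{m,m}$: every vertex is automatically a genuine two-level vector (since $d_m\geq 1$ and $e'/k\geq 1$, so rank $M(\a_k(\d))=2$ and your ``spurious rank-one extremes'' never arise), every vertex has trace $e$, and $\omega(\a_k(\d))=k(m-k)\frac{e'}{k}d_m$. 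The hypotheses on $p$ and $q$ are then consumed not in carving a polytope but in verifying the constraint $n_1+n_2=d_m+\frac{e'}{k}\geq r$ at each vertex, via $d_m\leq\frac{e-1}{m}$ (non-completeness) and $m\leq p\leq l+1$. Your $h=2$ versus $h\geq 3$ split also becomes unnecessary in this formulation (for $h=2$ the decomposition is trivial), and the equality case, which you wave at, requires tracing the uniqueness clause of Theorem~\ref{e=rk+r-1} through the normalization $\#U\leq\#V$ to discard the dual solution $(m_1,m_2)=(\frac{e-r+1}{r},1)$. So: correct strategy, but the decisive construction is absent, and the direction you sketch for it would need to be replaced by the per-$\d$ simplex above.
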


 \begin{proof}
    Corollary~\ref{cor:optsolmpr} implies that in order to find
    $\max_{G \in \cK(p,q,e)}$, it is enough to consider graphs
    $G_D = (U,V,E)$, for some $D  = \set{d_1 \geq d_2 \geq
    \cdots \geq d_m}$, where $m \leq p$ and $d_1 \leq q$.  We
    are going to assume that $\#U = m \leq \#V = d_1$  (if this
    is not satisfied consider the isomorphic graph
    $G_D'=(V,U,E)$). Let $\d = (d_1,\ldots,d_m)$ be the degree
    sequence of $D$.

    Since $\cK(e,p,q)$ does not contain a complete bipartite
    graph, we know that $m \geq 2$.
    Proposition~\ref{prop:upbndm} yields that $m \geq r$ for $r
    \geq 3$. Let $\delta_i := d_i - d_m$ for $i=1,\ldots,m$.
    We define $s$ by $\delta_s > 0 = \delta_{s+1} = \cdots =
    \delta_m$, and put $\bm{\delta} :=
    (\delta_1,\ldots,\delta_s)\trans$. Thus switching from $\d$
    to $\bm{\delta}$ amounts to deleting the first $d_m$
    columns of the Ferrers diagram of $\d$, and then deleting
    empty rows. The resulting Ferrers diagram has a total of
    $e' := e - md_m = \sum_{i=1}^s \delta_i$ components equal
    to $1$ and the rest are zero. Note that $\delta_1 \leq q -
    d_m$. As $h \geq 2$, we have that $\delta_s \geq 1$.  Hence
    $e' \leq e-1$ and $\frac{e'}{s} \geq 1$.

    We now consider the following polyhedron in $\R^s$:
    \begin{equation}\label{bigcone}
      P := \set{(x_1,\ldots,x_s)\trans \in \R^s, \;\; x_1 \geq x_2 \geq
      \cdots \geq x_s \geq 0, \;\; \sum_{i=1}^s x_i = e'}.
    \end{equation}
    Using the notation
    \begin{equation}\label{def1li}
      \1_{n,i} := (\underbrace{1,\ldots,1}_i,0,\ldots,0)\trans\in \R^n, \quad
      i=1,\ldots,n,
    \end{equation}
    it is clear that the extreme points of $P$ are $\frac{e'}{i}
    \1_{l,i}$, $i=1,\ldots,s$.

    Let us define
    \begin{equation}\label{defai}
      \a_{i}(\d) = (a_{1,i},\ldots,a_{m,i})\trans := \frac{e'}{i}\1_{m,i} +
      d_m\1_{m,m}, \quad i=1,\ldots,s.
    \end{equation}
    We note that $\d \in \rC(\d) := \conv{\set{\a_{1}(\d), \ldots,\a_{s}(\d)}}$. Indeed,
    $\bm{\delta} \in P$, and therefore there exist
    $\alpha_1,\ldots,\alpha_s \geq 0$ satisfying $\sum_{i=1}^s \alpha_i
    = 1$ and $\sum_{i=1}^s \alpha_i \frac{e'}{i}\1_{l,i} =
    \bm{\delta}$. Then
       \begin{eqnarray*}
        \sum_{i=1}^s \alpha_i \a_{i}(\d) =
        \sum_{i=1}^s
        \alpha_i \left( \frac{e'}{i} \1_{m,i} + d_m \1_{m,m} \right) =
        \left( \sum_{i=1}^s\alpha_i\frac{e'}{i}\1_{m,i} \right) + d_m\1_{m,m}
        \\
        (d_1,\ldots,d_s,d_m,\ldots,d_m)\trans =
        (d_1,\ldots,d_s,d_{s+1},\ldots,d_m)\trans = \d.
      \end{eqnarray*}
     Since $\d$ is a convex combination of $\a_1(\d),\ldots,\a_s(\d)$, it
     follows that $M(\d)$ is the same convex combination of
     $M(\a_1(\d)),\ldots,M(\a_s(\d))$. Combine (\ref{eqMd}) with
     Proposition~\ref{prop:convp}
     to obtain
     \begin{equation}\label{basicin}
       \lm(G_D)^2 = \lambda_1(M(\d)) \leq \max_{1 \leq k \leq l} \lambda_1(M(\a_k(\d))).
     \end{equation}
     The vector $\a_k(\d)$ has the form
     $\a_k(\d) = (\underbrace{x,\ldots,x}_k,\underbrace{y,\ldots,y}_{m-k})\trans$
     with $x = \frac{e'}{k} +
     d_m$ and $y = d_m$. Therefore the first $k$ rows of $M(\a_k(\d))$ are
     equal to $\a_k(\d)$ and the last $m-k$ rows are equal to $y \1_{m,m}$.
     Proposition~\ref{prop:tpmx} and Corollary~\ref{cor:tpmx} yield that $M(\a_k(\d))$
     is a nonnegative definite matrix of rank $2$.
     It satisfies
     \begin{equation}\label{eigmak}
       \lambda_1(M(\a_k(\d))) + \lambda_2(M(\a_k(\d))) = \tr M(\a_k(\d)) =
       k \left(\frac{e'}{k}+d_m \right) + (m-k)d_m = e.
     \end{equation}
     The product $\lambda_1(M(\a_k(\d)))\lambda_2(M(\a_k(\d)))$ is equal
     to the sum
     \[\sum_{i<j} \lambda_i(M(\a_k(\d))) \lambda_j(M(\a_k(\d))),\]
     which in turn equals the
     coefficient of $\lambda^{m-2}$ in the characteristic polynomial
     \[\prod_i (\lambda - \lambda_i(M(\a_k(\d)))).\]
     This coefficient equals in turn the sum of all $2 \times
     2$ principal minors of $M(\a_k(\d))$. There are $k(m-k)$
     contributing minors, each of the form $\det
     \twobytwo{x}{y}{y}{y} = y(x-y)$. Therefore
     \begin{equation}\label{eigmakprime}
       \lambda_1(M(\a_k(\d)))\lambda_2(M(\a_k(\d))) = \omega(\a_k(\d)) := k(m-k)\frac{e'}{k}d_m.
     \end{equation}
     Hence
     \begin{equation}\label{eq:eiginmak}
       \lambda_1(M(\a_k(\d))) = \frac{e + \sqrt{e^2-4 \omega(a_k(\d))}}{2}.
     \end{equation}
     Thus the maximum possible value of $\lambda_1(M(\a_k(\d)))$ is
     achieved for the minimum value of $\omega(\a_k(\d))$.
     This situation corresponds to the minimization problem we
     studied in Theorem~\ref{e=rk+r-1}. We put
     \[m_1 = k, \quad m_2 = m-k, \quad n_1 = d_m, \quad n_2 = \frac{e'}{k}.\]
     Clearly $m_1 \geq 1$ and $n_1 \geq 1$. Also $m_2 = m-k
     \geq s+1-k \geq 1$ and $n_2 = \frac{e'}{k} \geq
     \frac{e'}{s} \geq 1$. Recall that $m = m_1 + m_2 \geq r$.
     We claim that for each $2 \leq r \in \N$, the inequality
     $n_1+n_2 \geq r$ holds.  For $r=2$ this inequality follows
     from the inequalities $n_1 \geq 1, n_2 \geq 1$.  Assume
     now that $r \geq 3$.  Observe that
     \[n_1+n_2 = d_m + \frac{e'}{k} \geq d_m + \frac{e'}{s} \geq d_m + \frac{e'}{m-1}
     = d_m + \frac{e-md_m}{m-1} = \frac{e-d_m}{m-1}.\]
     Recall that $G_D$ is not complete bipartite.  Since $d_1 \geq \cdots \geq
     d_m$,
     we deduce that $d_m \leq \frac{e-1}{m}$.  Hence
     \[\frac{e-d_m}{m-1} \geq
     f(m) := \frac{e}{m-1} + \frac{1}{m(m-1)}.\]
     So $f(m)$ is a decreasing function for $m > 1$.  For $m =
     p = l+1$, obtain $f(l+1) > \frac{e}{l} = \frac{rl+r-1}{l}
     > r$ if $r \geq 3$. Hence $n_1+n_2 \geq r$ for any $r \geq
     2$. Theorem~\ref{e=rk+r-1} implies the inequality
     \begin{equation}\label{eq:maxomd}
       \omega(\a_k(\d)) \leq \frac{(r-1)(e-r+1)}{r}.
     \end{equation}
     Hence
     \[ \lambda_1(M(\a_k(\d))) \leq\frac{e+\sqrt{e^2-(\frac{(r-1)(e-r+1)}{r})^2}}{2}
     =\lm(G_{r,l+1})^2,\]
     where the last equality follows from Theorem~\ref{theo:estimate}.
     We use~(\ref{basicin}) to deduce
     \begin{equation}\label{eq:maxgrl}
        \lm(G_D)\leq \lm(G_{r,l+1}) \text{ for any } G_D \in \cK(p,q,e).
     \end{equation}
     It is left to show that equality holds
     in~(\ref{eq:maxgrl}) if and only if $D = D_* := \set{d_1 =
     \cdots = d_{r-1} = l+1 > d_r =l}$. Let
     $\d_*=(l+1,\ldots,l+1,1)$ be the corresponding degree
     sequence of $D_*$. We now consider the equality case
     in~(\ref{eq:maxomd}).  Theorem~\ref{e=rk+r-1} asserts that
     equality holds only if one of the two conditions
     in~(\ref{minsol3k+r-1}) holds.

     Assume first that the first condition
     of~(\ref{minsol3k+r-1}) holds.  So $n_1 = \frac{e-r+1}{r}
     = l$.  On the other hand $n_1 = d_m$.  So $d_m = l$.  Also
     $m = m_1+m_2 = r-1+1 = r$. Furthermore, $n_2 =
     \frac{e'}{k} = 1$.  This can happen if and only if
     $\delta_1 = \cdots = \delta_k=1$.  Hence $d_1 = \cdots =
     d_s = l+1$ and $d_{s+1} = d_r = l$.  Since $e = lr+r-1$,
     we deduce that $s=r-1$.

    Assume now that the second condition holds
    in~(\ref{eq:maxomd}). So $d_m = n_1 = r-1$ and
    $\frac{e'}{k} = 1$.  Hence $d_1 = \cdots = d_s = r >
    d_{s+1} = \cdots = d_m = r-1$. We have $m_1 =
    \frac{e-r+1}{r} = l = k$ and $m_2 = 1$.  So $m = m_1+m_2 =
    l+1$. Since $e = rl+r-1$, we deduce that $s = r-1$. Hence
    $D = D^* = \set{d_1 = \cdots = d_l = r > d_{l+1} = r-1}$.
    Note that $G_{D^*}$ is isomorphic to $G_{D_*} = G_{r,l+1} =
    (U_*,V_*,E)$. More precisely, $G_{D_*} = (V_*,U_*,E)$.
    Assume first that $r \geq 3$.  Then $\#V_* = l+1 > r =
    \#U_*$.  This case is ruled out since we agreed to consider
    only $G_D = (U,V,E)$ where $\#U \leq \#V$.  If $r=2$, then
    $\#V_* = \frac{e+1}{2} = l+1$ and $\#U = 2$. If $e \geq 5$,
    then this case is ruled out as above. If $e = 3$, then any
    $G \in \cK(2,2,3)$ is isomorphic to $G_{2,2}$, and the
    theorem trivially holds in this case.  In particular any
    $G_D \in \cK(2,2,3)$ is equal to $G_{D_*}$.

    Let $G_D = (U,V,E) \in \cK(p,q,e)$, and assume that $\#U
    \leq \#V$ and $D \ne D_*$.  The above arguments show that
    $\omega(\a_k(\d)) > \omega(\a_{r-1}(\d_*))$ for
    $k=1,\ldots,s$.  Hence $\lambda_1(M(\a_k(\d)) <
    \lm(G_{r,l+1})^2$ for $k=1,\ldots,s$, and~(\ref{basicin})
    yields that $\lm(G_D) < \lm(G_{r,l+1})^2$.
 \end{proof}

   \bibliographystyle{plain}

\begin{thebibliography}{MMM}
    %
    \bibitem{BH}  R.A. Brualdi and  A.J. Hoffman,
    On the spectral radius of $(0,1)$-matrices, \emph{Linear Algebra Appl.}
    65 (1985), 133--146.
    %
    \bibitem{Fr1} S. Friedland, The maximal eigenvalue of $0$-$1$ matrices with
    prescribed number of ones,  \emph{Linear Algebra Appl.}  69 (1985), 33--69.
    %
    \bibitem{Fr2} S. Friedland, Bounds on the spectral radius of graphs with $e$
    edges,   \emph{Linear Algebra Appl.} 101  (1988), 81--86.
    %
    \bibitem{Kar} S. Karlin, \emph{Total positivity},  Stanford University Press,
    Stanford, Calif. 1968.
    %
    \bibitem{Row}  P. Rowlinson, On the maximal index of graphs with a prescribed number of
    edges, \emph{Linear Algebra Appl.}  110 (1988), 43--53.
    %
    \bibitem{Gantmacher} F.R. Gantmacher, The Theory of Matrices
    Volume One, Chelsie, New York, 1977.
    %
    \bibitem{Mahadev-Peled} N.V.R. Mahadev and Uri N. Peled,
    Threshold Graphs and Related Topics, \emph{Annals of Discrete
    Mathematics} 56, North-Holland, New York, 1995.
    %
    \bibitem{Sta} R.P. Stanley, A bound on the spectral radius of graphs with $e$
    edges,  \emph{Linear Algebra Appl.}  87 (1987), 267--269.
    %
    \bibitem{Yannakakis} M. Yannakakis, The complexity of the
    partial order dimension problem, \emph{SIAM Journal on Algebraic and
    Discrete Methods}, 3 (1982), 351--358.
    %
   \end{thebibliography}
   
 \end{document}